\providecommand{\U}[1]{\protect\rule{.1in}{.1in}}
\def\theenumi{\arabic{enumi}}
\def\theenumii{\alph{enumii}}
\def\p@enumii{\theenumi.}
\def\theenumiii{\arabic{enumiii}}
\def\p@enumiii{(\theenumi)(\theenumii)}
\def\p@enumiv{\p@enumiii.\theenumiii}
\theoremstyle{plain}
\newtheorem{theorem}{Theorem}[section]
\newtheorem{lemma}[theorem]{Lemma}
\newtheorem{proposition}[theorem]{Proposition}
\newtheorem{corollary}[theorem]{Corollary}
\numberwithin{equation}{section}
\theoremstyle{definition}
\newtheorem{definition}[theorem]{Definition}
\newtheorem{example}[theorem]{Example}
\newtheorem{remark}[theorem]{Remark}
\newtheorem{thmab}{Theorem}
\newtheorem*{thm}{Theorem}
\DeclareMathOperator{\FI}{FI}
\DeclareMathOperator{\Tor}{Tor}
\newcommand{\Sn}{\mathfrak{S}}
\newcommand{\Ob}{\mathcal{O}}
\newcommand{\Z}{{\mathbb{Z}}}
\newcommand{\F}{\mathcal{F}}
\newcommand{\mi}{\mathfrak{m}}
\newcommand{\dt}{\bullet}
\newcommand{\arXiv}[1]{\href{http://arxiv.org/abs/#1}{\nolinkurl{arXiv:#1}}}
\newcommand{\arXivV}[2]{\href{http://arxiv.org/abs/#1}{\nolinkurl{arXiv:#1v#2}}}
\title{An application of the theory of $\FI$-algebras to graph configuration spaces}
\author[E.~Ramos]{Eric Ramos}
\address{University of Michigan Department of Mathematics, 530 Church St., Ann Arbor, MI 48109}
\email{egramos@umich.edu}
\thanks{The first author was supported by NSF grant DMS-1704811.}
\begin{document}

\begin{abstract}
Recent work of An, Drummond-Cole, and Knudsen \cite{ADK}, as well as the author \cite{R}, has shown that the homology groups of configuration spaces of graphs can be equipped with the structure of a finitely generated graded module over a polynomial ring. In this work we study this module structure in certain families of graphs using the language of $\FI$-algebras recently explored by Nagel and R\"omer \cite{NR}. As an application we prove that the syzygies of the modules in these families exhibit a range of stable behaviors.
\end{abstract}

\keywords{FI-modules, graph configuration spaces}

\maketitle

\section{Introduction}

There has been a recent raising interest in studying \emph{configuration spaces of graphs} due to their connections with robotics and topological motion planning \cite{Fa,G}. In this paper, a \emph{graph} is a 1-dimensional, compact simplicial complex. Its \emph{$n$-stranded configuration space} is the topological space
\[
U\F_n(G) :=  \{(x_1,\ldots,x_n) \in G^n \mid x_i \neq x_j\}/\Sn_n.
\]

In this paper, we explore the homology groups of these configuration spaces from a relatively new perspective, following the work of An, Drummond-Cole, and Knudsen \cite{ADK}, as well as the author \cite{R}.

For each $q \geq 0$, and each graph $G$, the \emph{total $q$-th homology group} is the graded abelian group
\[
\mathcal{H}_q(G) := \bigoplus_{n \geq 0} H_q(U\F_n(G))
\]
Writing $A_G$ for the integral polynomial ring with variables indexed by the edges of $G$, the following theorem was proven in the case where $G$ is a tree by the author \cite{R}, and in the general case by An, Drummond-Cole, and Knudsen.

\begin{thm}[Ramos, \cite{R}; An, Drummond-Cole, Knudsen, \cite{ADK}]
The graded abelian group $\mathcal{H}_q(G)$ can be equipped with the structure of a finitely generated, graded module over $A_G$.
\end{thm}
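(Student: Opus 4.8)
The plan is to replace the configuration spaces by a finite combinatorial model, transport both the $A_G$-action and the finite generation down to the chain level, and then invoke Noetherianity of $A_G$.

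\textbf{Step 1: Reduction to a discrete model.} First I would pass to discretized configuration spaces. By a theorem of Abrams (with later refinements), if $G$ is \emph{sufficiently subdivided} --- every essential vertex separated from the others and every embedded cycle long enough --- then $U\F_n(G)$ deformation retracts onto the discrete configuration space $UD_n(G)$: the cube complex whose vertices are the $n$-element subsets of the vertex set of $G$, with a $k$-cube for each choice of $k$ pairwise non-adjacent unoccupied edges together with a configuration on the remaining vertices. Subdividing an edge does not change the homotopy type of $U\F_n(G)$ (part of the ``subdivisional'' formalism of \cite{ADK}), so one may assume $G$ is sufficiently subdivided from the outset, and then $H_q(U\F_n(G)) \cong H_q(UD_n(G))$.

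\textbf{Step 2: The chain complex and the module structure.} Next I would assemble the cellular chain complexes of the $UD_n(G)$ over all $n \geq 0$ into one bigraded complex $S_\bullet(G) = \bigoplus_{n \geq 0} S_\bullet(G)_n$, and promote it to a complex of $A_G$-modules via \emph{edge stabilization}: for an edge $e$, fix an endpoint and let the variable $e$ act by inserting one additional particle onto $e$ near that endpoint, a chain map raising $n$ by one. Distinct edges commute, giving an action of the free commutative monoid on $E(G)$, hence of $A_G$; passing to homology endows $\mathcal{H}_q(G) = H_q(S_\bullet(G))$ with an $A_G$-module structure. One must check this structure is independent of the chosen endpoints and is compatible with further subdivision, so that it depends only on $G$.

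\textbf{Step 3: Degreewise finite generation.} The core point is that the combinatorial model realizes $S_\bullet(G)$, in each homological degree, as a finitely generated $A_G$-module. Concretely, $S_\bullet(G)$ can be written as $A_G \otimes_{\Z} B_\bullet(G)$ for a finite bigraded complex $B_\bullet(G)$ of free abelian groups built out of the stars of the essential vertices of $G$ (a tensor product over the finitely many essential vertices of small explicit local complexes), with $A_G$ acting on the first tensor factor and the differential incorporating the edge variables; since $B_\bullet(G)$ is finite in each homological degree, $S_q(G)$ is a finite-rank free $A_G$-module. For $G$ a tree one may alternatively induct on the number of edges, cutting $G$ at an interior vertex to obtain a Mayer--Vietoris long exact sequence of $A_G$-modules relating $\mathcal{H}_q(G)$ to the homologies of smaller trees, as in \cite{R}.

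\textbf{Step 4: Conclusion.} As $A_G$ is a polynomial ring over $\Z$ it is Noetherian, and $\mathcal{H}_q(G)$ is a subquotient of the finitely generated module $S_q(G)$, hence itself finitely generated; everything is graded by $n$, so $\mathcal{H}_q(G)$ is a finitely generated graded $A_G$-module. I expect the main obstacle to be Steps 2--3 in combination: constructing a functorial, degreewise finitely generated chain model that simultaneously computes the homology correctly and carries the geometric edge-stabilization action --- that is, establishing the explicit vertex-local decomposition of the chain model and its compatibility with subdivision, which is exactly what makes the $A_G$-module structure both well-defined and visibly finite.
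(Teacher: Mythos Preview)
Your proposal is correct and takes essentially the same approach as the paper (which cites \cite{ADK}): the chain model you describe in Step~3 --- $A_G$ tensored with a finite tensor product of small local complexes indexed by vertices --- is precisely the Swiatkowski complex $Sw(G) = A_G \otimes \bigotimes_v Sw(v)$ introduced in the paper, whose homology is $\mathcal{H}_\star(G)$ by Theorem~\ref{swiahomology}, and finite generation then follows from Noetherianity of $A_G$ exactly as in your Step~4. The only difference is one of presentation: \cite{ADK} defines this complex directly and proves it computes the correct homology, rather than first passing through Abrams' discretized cube complex and subdivision, which sidesteps the subdivision-compatibility check you flag at the end of Step~2.
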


It therefore becomes natural to ask whether one can use techniques from commutative algebra to deduce facts about these homology groups. In \cite{R}, the author showed that the total homology groups could be decomposed into sums of graded shifts of square-free monomial ideals, in the cases wherein $G$ is a tree. Other than that work, however, very little has been done in this direction. In this work, we study the modules $\mathcal{H}_q(G)$ for specific families of graphs $G$, using recent work of White and the author \cite{RW}, and L\"utgehetmann \cite{L}, as well as Nagel and R\"omer \cite{NR}.

In \cite{RW}, White and the author introduced the notion of a \emph{finitely generated $\FI$-graph} (see Definition \ref{figraphdef}). For the purposes of this introduction, we restrict to a specific natural class of examples of this structure. Fix two graphs, $G, H$, as well as a choice of vertex $v_G,v_H$ in each. Then for each $n \geq 0$ we may define
\[
G_n := G \bigvee^n H.
\]
The sequence of graphs $\{G_n\}$ is notable for us, as it carries an action by the category $\FI$ of finite sets and injections. Indeed, if we set $[n] := \{1,\ldots, n\}$ and let $f:[n] \hookrightarrow [r]$ be an injection, then we obtain an injective homomorphism of graphs (see Definition \ref{graphdef})
\[
G(f):G_n \rightarrow G_r
\]
which fixes the vertices of $G$, while permuting the vertices of $H$ according to $f$. This structure was first exploited in the study of configuration spaces by L\"utgehetmann in \cite{L}. In particular, the following is proven.

\begin{thm}[L\"utgehetmann, \cite{L}]
Let $G_n$ be as above. Then for each $q,m \geq 0$, the functor
\[
[n] \mapsto H_q(U\F_m(G_n))
\]
from $\FI$ to abelian groups is finitely generated in the sense of \cite{CEF}.
\end{thm}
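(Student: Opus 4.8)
The plan is to realize the abelian group $H_q(U\F_m(G_n))$, naturally in $n$, as the homology of a chain complex of finitely generated $\FI$-modules, and then to conclude using the Noetherianity of the category $\FI$. The only nontrivial input is a uniform version of Abrams' discretization theorem for the family $\{G_n\}$.

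\textbf{A discrete model, uniform in $n$.} Fix $m$ and subdivide each of $G$ and $H$ so that (i) each is ``sufficiently subdivided'' for $m$ points in the sense of Abrams --- every embedded arc between essential vertices and every embedded cycle has at least $m+1$ edges --- and (ii) each attaching vertex $v_G,v_H$ has distance at least $m+1$ from every essential vertex of its graph. Replacing $G,H$ by these subdivisions does not alter any $U\F_m(G_n)$ up to homeomorphism, so we assume this from now on. I claim $G_n$ is then sufficiently subdivided for $m$ points for \emph{every} $n$: the wedge point $w$ separates the summands, so an embedded cycle in $G_n$ lies entirely in one copy of $G$ or one copy of $H$, while an embedded arc between essential vertices either lies in one such copy or decomposes as an arc from an essential vertex to $w$ inside one summand followed by an arc from $w$ to an essential vertex inside another; in every case its edge-length is bounded below using only the subdivisions of $G$ and $H$, with no dependence on $n$. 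Hence by Abrams' theorem the natural inclusion of the discretized configuration space $D\F_m(G_n) \into U\F_m(G_n)$ --- where the $j$-cells of the cube complex $D\F_m(G_n)$ are the ways of placing $m$ particles on $G_n$ as $m-j$ distinct vertices and $j$ edges with pairwise disjoint closures --- is a homotopy equivalence. Since $D\F_m(-)$ and $U\F_m(-)$ are functors on graph embeddings and this inclusion is a natural transformation, $n\mapsto D\F_m(G_n)$ is a diagram over $\FI$ mapping to $n\mapsto U\F_m(G_n)$ by an objectwise homotopy equivalence, so the two have isomorphic $H_q$ as $\FI$-modules.

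\textbf{The chains are finitely generated, and passing to homology.} Applying cellular chains to the $\FI$-diagram $n\mapsto D\F_m(G_n)$ --- each $G(f)$ being a cellular embedding, hence inducing a chain map --- yields a complex $C_\bullet(n)$ of $\FI$-modules whose homology is $H_\bullet(U\F_m(G_n))$. For fixed $j$, the $j$-cells of $D\F_m(G_n)$ form an $\FI$-set in $n$; a $j$-cell places $m$ particles on $G_n$ and so meets at most $m$ of the $n$ copies of $H$, and composing with a suitable injection realizes it as the image of a $j$-cell of $G_k$ for some $k\le \min(m,n)$. Thus the $\FI$-set of $j$-cells is generated by the finitely many $j$-cells of $G_0,\dots,G_m$, hence finitely generated, and its $\Z$-linearization $C_j(n)$ is a finitely generated $\FI$-module. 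We have therefore exhibited $H_\bullet(U\F_m(G_n))$ as the homology of a complex of finitely generated $\FI$-modules over $\Z$; since $\FI$ is Noetherian over $\Z$ \cite{CEF}, submodules and quotients of finitely generated $\FI$-modules are finitely generated, so $H_q(U\F_m(G_n))$ is a finitely generated $\FI$-module, as claimed.

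\textbf{Where the work is.} The formal content --- assembling the $\FI$-diagram of cellular chain complexes and bounding the number of cell orbits --- is routine once the model is in place. The point that genuinely needs care is the uniform subdivision claim: that a single finite subdivision of $G$ and $H$ makes every $G_n$ eligible for Abrams' discretization, the subtlety being the combinatorics near the wedge point $w$, whose degree grows with $n$. One could instead run the same argument with the chain complex of \cite{ADK} in place of Abrams' cube complex; there the $n$ copies of $H$ contribute $n$ interchangeable tensor factors, and finite generation of the chain $\FI$-modules is immediate --- which also ties the statement to the $\FI$-algebra viewpoint of the present paper.
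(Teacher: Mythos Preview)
The paper does not prove this statement; it is quoted as a result of L\"utgehetmann \cite{L} and used only as context for Theorem~\ref{mainthmab}. So there is no in-paper proof to compare against. That said, your argument is sound in outline and is close to what L\"utgehetmann actually does: realize $H_q(U\F_m(G_n))$ as the homology of a complex of $\FI$-modules whose terms are linearizations of finitely generated $\FI$-sets (a configuration of $m$ points meets at most $m$ of the $n$ wedge summands, so the cell-orbits are generated in degree $\leq m$), and finish with Noetherianity over $\Z$.

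One small wrinkle in your uniform-subdivision step: condition~(ii) as written requires $v_G$ to be at distance $\geq m+1$ from every essential vertex of $G$, which is impossible if $v_G$ itself has degree $\neq 2$. Since the wedge point $w$ is essential in $G_n$ for $n\geq 1$, what you actually need is that every embedded path in $G_n$ from $w$ to another essential vertex has length $\geq m+1$; this is guaranteed simply by subdividing every edge of $G$ and of $H$ incident to $v_G$ or $v_H$ into at least $m+1$ pieces (or, more crudely, subdividing every edge that many times). With that correction the rest of your argument goes through.

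Your closing remark is on the nose: the alternative route via the \'Swi\k{a}tkowski complex of \cite{ADK} is exactly the machinery the present paper uses to prove its main Theorem~\ref{mainthm}. In the course of that proof the paper shows that each graded piece $\widetilde{Sw}_q(G_\dt)_d$ is the $\Z$-linearization of a finitely generated $\FI$-set $X_{q,d}(G_\dt)$; fixing the grade $d=m$ and taking homology recovers the L\"utgehetmann statement directly, with no need for Abrams' discretization.
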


Therefore, if we fix the number of points being configured, but allow our graph to grow in a certain way, one obtains a finiteness result. On the other hand, we have also seen that if we fix the graph, and allow the number of points being configured to increase, we obtain a similar finiteness result. The main theorem of this work is that these two types of finiteness are compatible with one another in the appropriate sense.

\begin{thmab}\label{mainthmab}
Let $G_n$ be as above. Then for each $q \geq 0$, the functor
\[
[n] \mapsto \mathcal{H}_q(G_n)
\]
from $\FI$ to graded abelian groups is finitely generated as a graded module over the functor
\[
[n] \mapsto A_{G_n}
\]
from $\FI$ to $\Z$-algebras, in the sense of \cite{NR} (see also Definition \ref{fgdef}).
\end{thmab}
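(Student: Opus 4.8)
The plan is to realize the functor $[n]\mapsto\mathcal{H}_q(G_n)$ as a subquotient of an explicit finitely generated module over a Noetherian $\FI$-algebra, and to conclude by Noetherianity. First I would check that $\mathcal{A}:=\big([n]\mapsto A_{G_n}\big)$ is a Noetherian graded $\FI$-algebra in the sense of \cite{NR}. Since gluing graphs at a vertex removes no edges, $E(G_n)=E(G)\sqcup\big(E(H)\times[n]\big)$, so $A_{G_n}=A_G\otimes_{\Z}\Z[E(H)\times[n]]$; as an $\FI$-algebra this is the polynomial $\FI$-algebra on $|E(H)|$ copies of the representable $\FI$-module $M(1)$, base-changed along $\Z\to A_G$ to a Noetherian polynomial ring in finitely many variables. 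The Noetherianity results of Nagel and R\"omer then apply to $\mathcal{A}$: every graded $\mathcal{A}$-submodule of a finitely generated graded $\mathcal{A}$-module, and hence every subquotient of one, is again finitely generated.

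Next I would use the chain-level model of \cite{ADK} (compare \cite{R} in the tree case): for every finite graph $\Gamma$ there is a complex $S_\bullet(\Gamma)$ of finitely generated free graded $A_\Gamma$-modules, a \'Swiatkowski-type complex, with $H_q\big(S_\bullet(\Gamma)\big)\cong\mathcal{H}_q(\Gamma)$ as graded $A_\Gamma$-modules, whose $A_\Gamma$-generators in homological degree $q$ are indexed by assignments of local ``states'' of total weight $q$ to the essential vertices of $\Gamma$. Because $\{G_n\}$ is an $\FI$-graph (Definition \ref{figraphdef}), the injective graph homomorphisms $G(f)\colon G_n\to G_r$ carry essential vertices to essential vertices and edges to edges, and so induce morphisms of complexes $S_\bullet(G_n)\to S_\bullet(G_r)$ compatible with the $A$-actions; thus $[n]\mapsto S_\bullet(G_n)$ is a complex of graded $\mathcal{A}$-modules computing $[n]\mapsto\mathcal{H}_q(G_n)$. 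One must also verify that the resulting $\mathcal{A}$-module structure is the one in the statement: in each fixed grading $m$ it restricts to the $\FI$-module $[n]\mapsto H_q(U\F_m(G_n))$ of \cite{L}, and for each fixed $n$ it restricts to the $A_{G_n}$-module of \cite{R,ADK}.

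The heart of the proof is then to show that each $[n]\mapsto S_q(G_n)$ is a finitely generated graded $\mathcal{A}$-module. The essential vertices of $G_n$ are those of $G$, possibly the wedge point, and the essential vertices lying in the $n$ copies of $H$; an $A_{G_n}$-generator of $S_q(G_n)$ assigns a nontrivial state to at most $q$ essential vertices and therefore meets at most $q$ of the $H$-summands. Hence the underlying $\FI$-module $[n]\mapsto S_q(G_n)$ is generated in $\FI$-degree $\le q$, and combining this with the finite $A_{G_n}$-generation of $S_q(G_n)$ for each individual $n$ produces a surjection of graded $\mathcal{A}$-modules onto $[n]\mapsto S_q(G_n)$ from a finite direct sum of grading-shifted induced modules of the form $\mathcal{A}\otimes_{\Z}M(c)$. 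Thus $[n]\mapsto S_q(G_n)$ is finitely generated over $\mathcal{A}$, and therefore so is its subquotient $[n]\mapsto\mathcal{H}_q(G_n)=H_q\big(S_\bullet(G_n)\big)$, by the first step; this is exactly finite generation in the sense of Definition \ref{fgdef}.

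I expect the main obstacle to be the combinatorial bookkeeping in the last step: extracting from the explicit description of $S_\bullet$ in \cite{ADK} both the naturality of $\Gamma\mapsto S_\bullet(\Gamma)$ along the maps $G(f)$ and the assertion that the homological degree $q$ term involves only boundedly many of the $H$-copies. A secondary point requiring care is the compatibility claim -- that edge stabilization on $\mathcal{H}_q$ and the $\FI$-action are simultaneously induced by $S_\bullet$ -- where one uses that $v_G$ and $v_H$ are fixed basepoints and that $G(f)$ merely permutes the $H$-copies.
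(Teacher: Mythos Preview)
Your proposal is correct and follows essentially the same three-step architecture as the paper: (1) prove the $\FI$-algebra $[n]\mapsto A_{G_n}$ is Noetherian via Nagel--R\"omer; (2) assemble the \'Swi\k{a}tkowski complexes into a complex of graded $\mathcal{A}$-modules; (3) show each term is a finitely generated $\mathcal{A}$-module, and conclude by Noetherianity that the homology is as well.

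One point deserves emphasis. You describe $S_\bullet(\Gamma)$ only as ``a \'Swi\k{a}tkowski-type complex'' whose degree-$q$ generators assign ``states of total weight $q$'' to vertices. The paper is explicit that one must use the \emph{reduced} \'Swi\k{a}tkowski complex $\widetilde{Sw}(G_\bullet)$ here, and remarks that the finite-generation lemma is \emph{false} for the unreduced complex $Sw(G_\bullet)$. The reason is exactly the one your argument depends on: in the reduced complex the local module $\widetilde{S}(v)=\langle\emptyset,\,h_i-h_j\rangle$ has nontrivial generators only in homological degree $1$, so a degree-$q$ $A_{G_n}$-generator touches at most $q$ vertices and hence at most $q$ of the $H$-summands. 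In the unreduced complex, the generator $v\in S(v)$ has bidegree $(0,1)$, so $Sw_0(G_n)$ already has $A_{G_n}$-generators supported on arbitrarily many $H$-copies, and the $\FI$-finite-generation step fails. Your phrasing suggests you have the reduced model in mind, but you should name it and verify that the maps $G(f)$ carry $\widetilde{Sw}(G_n)$ into $\widetilde{Sw}(G_r)$ (they do, since $h_i-h_j\mapsto G(f)(h_i)-G(f)(h_j)$).

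A minor cosmetic difference: the paper packages step (3) by introducing the finitely generated $\FI$-set $X_{q,d}(G_\bullet)$ of tuples $(m,\widetilde{h}_1,\ldots,\widetilde{h}_q)$ and identifying $\widetilde{Sw}_q(G_\bullet)\cong\bigoplus_d \Z X_{q,d}(G_\bullet)$, whereas you argue directly via induced modules $\mathcal{A}\otimes_\Z M(c)$. These are equivalent bookkeeping for the same combinatorial fact. Also, the paper works with all vertices rather than essential ones; since $\widetilde{S}(v)\cong\Z$ when $\mu(v)=1$, this is harmless, but be aware that the $\mu(v)=2$ vertices do contribute nontrivially to $\widetilde{Sw}$.
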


\begin{remark}
As previously stated, the most general theorem involves studying a larger class of families of graphs (see Theorem \ref{mainthm}). That being said, it is unclear whether the scope this theorem extends to all finitely generated $\FI$-graphs (see Definition \ref{figraphdef}). This difficulty is fundamentally related to the fact that the $\FI$-algebra
\[
[n] \mapsto A_{G_n}
\]
is not necessarily Noetherian in general. It is an interesting question to ask whether the conclusion of Theorem \ref{mainthmab} will hold for all $\FI$-graphs, even in the case where $G_n = K_n$ is the complete graph on $n$ vertices.
\end{remark}

Coming back to the goal of studying the commutative algebra of $\mathcal{H}_q(G)$, we note that Theorem \ref{mainthmab} has concrete consequences related to the Betti numbers of these modules. If $M$ is a finitely generated graded module over the polynomial ring $k[x_1,\ldots,x_n]$, where $k$ is a field, then it admits a minimal free resolution
\[
0 \rightarrow F^{(n)} \rightarrow \ldots \rightarrow F^{(0)} \rightarrow M \rightarrow 0
\]
Tensoring with the residue field $k$, we define the \emph{$j$-th Betti number in homological degree $p$} to be
\[
\beta_{p,j}(M) := \dim_k(\Tor_p(M,k)_j).
\]
Knowledge of the Betti numbers allows one to access the number of generators in each grade of the $p$-th syzygies. Our second main result is that the Betti numbers of the modules $\mathcal{H}_q(G_n)$ stabilize in a strong sense.

\begin{thmab}\label{bettithm}
Let $G_n$ be as above, and let $k$ be a field. Then for each $p,j \geq 0$, the function
\[
n \mapsto \beta_{p,j}(\mathcal{H}_q(G_n;k))
\]
agrees with a polynomial for $n \gg 0$. Moreover, there exists a finite list of integers
\[
j_0(\mathcal{H}_q(G_n;k),p) < \ldots < j_t(\mathcal{H}_q(G_n;k),p)
\]
such that for all $n \gg 0$
\[
\beta_{n,p,j}(\mathcal{H}_q(G_n;k)) \neq 0 \iff j \in \{j_0(\mathcal{H}_q(G_n;k),p), \ldots, j_t(\mathcal{H}_q(G_n;k),p)\}.
\]
\end{thmab}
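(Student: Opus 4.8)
The plan is to deduce Theorem \ref{bettithm} from Theorem \ref{mainthmab} by resolving once, over the $\FI$-algebra, and then specializing to each $[n]$. Fix a field $k$. The proof of Theorem \ref{mainthmab} applies verbatim with $\Z$-coefficients replaced by $k$-coefficients, so the $\FI$-module $\mathbf{M}:=\big([n]\mapsto \mathcal{H}_q(G_n;k)\big)$ is a finitely generated graded module over the $\FI$-algebra $\mathbf{A}:=\big([n]\mapsto A_{G_n}\otimes_{\Z}k\big)$, writing $\mathbf{A}_{[n]}$ for its value at $[n]$. For the families $G_n=G\bigvee^n H$, the $\FI$-algebra $\mathbf{A}$ is a polynomial $\FI$-algebra whose generators --- the edges of $G$ together with the edges of a single copy of $H$ --- lie in $\FI$-degrees $0$ and $1$, and it is therefore Noetherian by \cite{NR}. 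Hence every finitely generated graded $\mathbf{A}$-module has finitely generated graded syzygies, and I may choose an exact complex
\[
\cdots \longrightarrow \mathbf{F}^{(1)} \longrightarrow \mathbf{F}^{(0)} \longrightarrow \mathbf{M} \longrightarrow 0
\]
in which each $\mathbf{F}^{(p)}$ is a finite direct sum of internal-degree shifts of free graded $\mathbf{A}$-modules $\mathbf{A}\otimes_k k[\Hom_{\FI}([m],-)]$, in the sense of \cite{NR}. This is the only step that genuinely uses the restriction to these families: $\mathbf{A}$ need not be Noetherian for a general $\FI$-graph, which is precisely the obstruction flagged in the Remark following Theorem \ref{mainthmab}.

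Next I would specialize the resolution. Evaluation $(-)_{[n]}$ is exact on $\FI$-modules and sends $\mathbf{A}\otimes_k k[\Hom_{\FI}([m],-)]$ to the free $\mathbf{A}_{[n]}$-module $\mathbf{A}_{[n]}^{\oplus|\Hom_{\FI}([m],[n])|}$, so $\mathbf{F}^{(\bullet)}_{[n]}$ is a (typically non-minimal) graded free resolution of $\mathcal{H}_q(G_n;k)$ over $\mathbf{A}_{[n]}$. Since $\Tor$ may be computed from any free resolution, and since evaluation and the internal grading both commute with forming $\Tor$, this yields for every $n$ and all $p,j\geq 0$ the identification
\[
\beta_{p,j}\big(\mathcal{H}_q(G_n;k)\big) \;=\; \dim_k\big(\Tor_p^{\mathbf{A}}(\mathbf{M},\underline{k})_j\big)_{[n]},
\]
where $\underline{k}:=\mathbf{A}/\mathbf{A}_{>0}$ is the constant $\FI$-module $k$.

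It then remains to analyze the bigraded object $\mathbf{T}:=\Tor_p^{\mathbf{A}}(\mathbf{M},\underline{k})$, computed by the complex $\mathbf{F}^{(\bullet)}\otimes_{\mathbf{A}}\underline{k}$. Its $p$-th term $\mathbf{F}^{(p)}\otimes_{\mathbf{A}}\underline{k}$ is a finite direct sum of internal-degree shifts of the free $\FI$-modules $k[\Hom_{\FI}([m],-)]$, hence a finitely generated graded $\FI$-module over $k$ supported in finitely many internal degrees. Being a subquotient of it as a graded $\FI$-module, $\mathbf{T}$ is supported in a finite set $J$ of internal degrees, and for each $j\in J$ the piece $\mathbf{T}_j$ is a finitely generated $\FI$-module over $k$ by Noetherianity of $\FI$-modules. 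By the structure theory of such modules \cite{CEF,NR}, $n\mapsto\dim_k(\mathbf{T}_j)_{[n]}$ agrees with a polynomial $P_{p,j}\in\QQ[n]$ for $n\gg 0$, which together with the displayed identification proves the first assertion. For the second, a polynomial is either identically zero or eventually nonzero, so with $J':=\{\,j\in J : P_{p,j}\not\equiv 0\,\}=:\{\,j_0<\cdots<j_t\,\}$ one gets $\beta_{p,j}(\mathcal{H}_q(G_n;k))\neq 0\iff j\in J'$ for all $n\gg 0$, as required.

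The main obstacle is thus the Noetherianity input in the first paragraph, which is exactly where the families $G\bigvee^n H$ are exploited; the remaining ingredients --- exactness of pointwise evaluation, $\FI$-free $\mathbf{A}$-modules evaluating to ordinary free modules, Noetherianity of $\FI$-modules, and polynomiality of their Hilbert functions --- are by now standard, the one point requiring care being the bookkeeping that matches the internal grading on $\Tor^{\mathbf{A}}$ with the one defining $\beta_{p,j}$. Alternatively, the finiteness of $J$ together with the finite generation of each $\mathbf{T}_j$ could be extracted directly from the finiteness of the Castelnuovo--Mumford-type regularity of $\mathbf{M}$ over the polynomial $\FI$-algebra $\mathbf{A}$ proved in \cite{NR}, bypassing the explicit choice of resolution.
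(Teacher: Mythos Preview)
Your proposal is correct and follows the same overall strategy as the paper: establish that $\mathcal{H}_q(G_\dt;k)$ is a finitely generated graded module over the Noetherian $\FI$-algebra $\mathbf{A}_{G_\dt}$, and then deduce the Betti-number statements from the Nagel--R\"omer machinery. The only difference is one of packaging. The paper simply invokes \cite[Theorem~7.7]{NR} (restated as Corollary~\ref{betti}) as a black box applying to any finitely generated graded module over a finitely generated standard graded $\FI$-algebra, whereas you have essentially written out the proof of that result --- build a free $\mathbf{A}$-resolution, specialize pointwise to get free $\mathbf{A}_n$-resolutions, identify the graded pieces of $\Tor$ with finitely generated $\FI$-modules over $k$, and appeal to polynomiality of their Hilbert functions. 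Your version has the pedagogical advantage of making the mechanism visible and of isolating exactly where Noetherianity of $\mathbf{A}_{G_\dt}$ (hence edge-linearity) enters; the paper's version has the advantage of brevity. Your closing remark about extracting the conclusion directly from the regularity bound in \cite{NR} is precisely the route the paper takes.
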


\begin{remark}
It has been known since at least \cite[Theorem 3.16]{KP} that the $A_G$-module $\mathcal{H}_1(G)$ is generated in degree $\leq 2$, for all graphs $G$. This result has also appeared in An, Drummond-Cole, and Knudsen \cite[Proposition 5.6]{ADK}. To the knowledge of the author, the present work is the first to provide some evidence of such uniform boundedness for the higher syzygies of $\mathcal{H}_q$, for $q \geq 2$.
\end{remark}

\section{Graph configuration spaces}

\begin{definition}\label{graphdef}
A \textbf{graph} is a 1-dimensional compact simplicial complex. We call the 0-simplicies of $G$ its \textbf{vertices}, $V(G)$, while the 1-simplices are its \textbf{edges}, $E(G)$. If $v$ is a vertex of $G$, then the \textbf{degree} of $v$, denoted $\mu(v)$, is the number of edges adjacent to $v$. A vertex is said to be \textbf{essential} if it has $\mu(v) \neq 2$. A \textbf{homomorphism of graphs} is a map between vertices
\[
\phi:V(G) \rightarrow V(G')
\]
which preserves adjacency.

If $G$ is a graph, the \textbf{(unordered) $n$-strand configuration space of $G$}, denoted $U\F_n(G)$, is given by
\[
U\F_n(G) := \{(x_1,\ldots,x_n) \in G^n \mid x_i \neq x_j\}/\Sn_n
\]
where $\Sn_n$ acts by permuting coordinates in the natural way.
\end{definition}

Our main concern will be developing a better understanding of the homology groups $H_q(U\F_n(G))$. While much of the current literature on the subject has been concerned with their relation to $\pi_1(U\F_n(G))$ (see \cite{FS,KKP}, for instance), in this work we will approach their study from a more recent algebraic perspective.

\begin{definition}
Writing $E(G)$ for the set of edges of $G$, We will use $A_G$ to denote the polynomial ring
\[
A_G := \Z[x_e \mid e \in E(G)]
\]
\end{definition}

In \cite{R}, the author showed that the total $q$-homology group, $\mathcal{H}_q(G) = \bigoplus_n H_q(U\F_n(G))$ could be endowed with the structure of a finitely generated, graded module over $A_G$ whenever $G$ was a tree. Following this, An, Drummond-Cole, and Knudsen showed that this could be extended to all graphs \cite{ADK}. Our interest in this paper will be to understand the syzygies of the module $\mathcal{H}_q$ for a certain family of graphs, which we call edge-linear (see Definition \ref{edgelin}). In particular, we prove facts about the Betti numbers, in the commutative algebraic sense, by applying the techniques of $\FI$-algebras recently developed by Nagel and R\"omer \cite{NR}, building on the work of Sam \cite{Sa,Sa2}, Snowden \cite{Sn}, and others.

\begin{definition}
Let $G$ be a graph. A \textbf{half-edge} of $G$ is a pair $h = (v,e)$ where $v$ is a vertex and $e$ is an edge adjacent to $v$. We will use $v(h)$ and $e(h)$ to denote the vertex and edge associated to $h$, respectively. The set of half edges with $v(h) = v$ will be written $H(v)$.

For a vertex $v \in V(G)$, we set $Sw(v)$ to be the abelian group freely generated by the symbols
\[
Sw(v) = <\emptyset, v, h \in H(v)>.
\]
We consider $Sw(v)$ to be a bigraded abelian group by setting
\[
|v| = (0,1), |\emptyset| = (0,0), |h| = (1,1).
\]
The \textbf{Swiatkowski complex} associated to $G$ is the differential bigraded $A_G$-module
\[
Sw(G) = A_G \otimes \bigotimes_v Sw(v),
\]
with
\[
|e| = (0,1)
\]
and differential defined by
\[
\partial(h) = e(h) - v(h), \partial(e) = \partial(v) = \partial(\emptyset) = 0.
\]

Expressed more classically, we may think of the Swiatkowski complex in the following way. For each $q \geq 0$, write $Sw_q(G)$ to be the subgroup of $Sw(G)$ generated by pure tensors of grade $(q,x)$ for some $x \geq 0$. Then $Sw_q(G)$ can be thought of as a graded module over $A_G$. The Swiatkowski complex is the complex of graded $A_G$-modules
\[
Sw_{\star}(G) : \ldots \rightarrow Sw_q(G) \stackrel{\partial}\rightarrow Sw_{q-1}(G) \rightarrow \ldots \rightarrow Sw_0(G) \rightarrow 0
\]
where $\partial$ is as above. We will use these two perspectives interchangeably in what follows.
\end{definition}

\begin{remark}
Because it will be relevant later, we note that the assignment $G \mapsto Sw(G)$ is functoral. In particular, given an injective homomorphism of graphs $\phi:G \rightarrow G'$ one obtains a morphism $Sw(G) \rightarrow Sw(G)$. The reader can check that this morphism preserves all the relevent structure of the complex including the bigrading, as well as the differential.

We will follow the notation of An, Drummond-Cole, and Knudsen regarding shifts in the grading of bigraded modules. If $M$ is a differential bigraded module over some algebra $A$, then we we will write $M\{1\}$ to denote the bigraded module with
\[
M\{1\}_{(a,b)} = M_{(a,b-1)}.
\]
Similarly, we will write $M[1]$ to denote the module with bigrading
\[
M[1]_{(a,b)} =M_{(a-1,b)}.
\]
The differentials of these modules follow the usual sign conventions for degree shifting.
\end{remark}

The Swiatkowski complex was orginally expressed in the above form by An, Drummond-Cole, and Knudsen in \cite{ADK}, though they credit work of Swiatkowski \cite{Sw} for the origin of the idea. The following theorem is extremely relevant for the remainder of this paper.

\begin{theorem}[An, Drummond-Cole, and Knudsen, Theorem 4.3 \cite{ADK}]\label{swiahomology}
There is an isomorphism of functors from the category of graphs and injective homomorphisms to bigraded abelian groups,
\[
\mathcal{H}_\star(G) \cong H_\star(Sw(G))
\]
In particular, for any graph $G$ the total $q$-homology group $\mathcal{H}_q(G)$ carries the structure of a finitely generated graded module over the polynomial ring $A_G$.
\end{theorem}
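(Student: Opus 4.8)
The plan is to reduce the homotopy type of the configuration space to a finite combinatorial model and then to compute the homology of that model by decomposing $G$ along its essential vertices. First I would pass to the discretized (Abrams) model: for each fixed $n$, after subdividing each edge of $G$ enough relative to $n$ --- which does not alter the homeomorphism type of $G$ and so, by homotopy invariance, does not alter $\mathcal{H}_\star(G)$ --- the space $U\F_n(G)$ deformation retracts onto the cube complex $UD_n(G)$ whose $q$-cells are the size-$n$ collections of pairwise disjoint closed vertices and edges containing exactly $q$ edges. In parallel I would replace $Sw(G)$ by its reduced form, chain homotopy equivalent to it but supported only on the essential vertices and the ``linear'' edges (maximal paths through degree-two vertices); this reduced complex is manifestly insensitive to subdivision, so it suffices to produce, for one sufficiently subdivided $G$, a natural isomorphism between $\bigoplus_n H_\star(UD_n(G))$ and the homology of this complex.

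For the comparison I would stratify $UD_n(G)$ by recording, for each linear edge $e$, the number $a_e$ of particles lying strictly in its interior, and, near each essential vertex $v$ of degree $d$, the local state: $v$ vacant, $v$ occupied, or a half-edge $h \in H(v)$ currently being traversed. On a single linear edge the discretized configuration space of $a_e$ unordered particles on an interval is contractible, so that edge contributes only the polynomial generator $x_e$ of bidegree $(0,1)$; summing over $a_e \ge 0$ assembles the factor $A_G = \Z[x_e \mid e \in E(G)]$. At an essential vertex $v$, inspecting the cells of the discretized configuration space of the closed star $\mathrm{St}(v)$ that touch no other essential vertex shows the local chain complex is free over $\Z[x_e \mid e \in H(v)]$ on the symbols $\emptyset$, $v$, and $h \in H(v)$ in bidegrees $(0,0)$, $(0,1)$, $(1,1)$, with the boundary of a traversed half-edge being ``a particle just inside $e(h)$'' minus ``a particle at $v$'' --- exactly the rule $\partial h = e(h) - v(h)$.

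It remains to glue. Since the overlap of the star of an essential vertex with an adjacent linear edge is again a configuration space of particles on an interval, hence homologically trivial in positive degrees, a discretized Mayer--Vietoris argument --- equivalently, the spectral sequence of the filtration of $UD_n(G)$ by the number of particles near the essential vertices, which degenerates --- identifies the total cellular chain complex with the tensor product $A_G \otimes \bigotimes_v Sw(v) = Sw(G)$, with the assembled differential; taking homology yields $\mathcal{H}_\star(G) \cong H_\star(Sw(G))$. Naturality in an injective homomorphism $\phi \colon G \to G'$ follows by choosing the subdivision, the Abrams retraction, and the stratification compatibly (subdivide $G'$ and pull back the structure along $\phi$). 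Finally, $\bigotimes_v Sw(v)$ is free abelian of finite rank, so $Sw(G)$ is a bounded complex of finitely generated free $A_G$-modules; as $A_G$ is Noetherian, each $H_q(Sw(G)) \cong \mathcal{H}_q(G)$ is a finitely generated graded $A_G$-module. I expect the gluing step to be the main obstacle: showing that the passage of particles between edge interiors and essential vertices is recorded faithfully by $\partial h = e(h) - v(h)$, and that the filtration spectral sequence carries no surviving higher differentials, demands honest combinatorial bookkeeping on the discretized model rather than a formal argument.
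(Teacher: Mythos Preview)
This theorem is not proved in the present paper; it is quoted verbatim as Theorem~4.3 of An--Drummond-Cole--Knudsen \cite{ADK} and used as a black box. There is therefore no proof in the paper to compare your proposal against.

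For what it is worth, your outline is broadly in the spirit of the argument in \cite{ADK}: they too pass through a discretized cubical model and analyze the chain complex locally around essential vertices before gluing. Their formalism, however, is organized via homotopy colimits of what they call subdivisional spaces rather than by invoking the Abrams retraction and a Mayer--Vietoris/filtration spectral sequence directly; this lets them handle naturality and the assembly over all $n$ simultaneously, whereas in your sketch the subdivision required by the Abrams model depends on $n$, so the claimed naturality ``choose everything compatibly'' across all $n$ and along an injective $\phi\colon G\to G'$ needs more care than you indicate. Your own closing caveat is on target: the gluing step --- in particular the claim that the filtration spectral sequence degenerates and that the boundary is captured exactly by $\partial h = e(h)-v(h)$ --- is where the genuine content lies, and what you have written is an outline rather than a proof.
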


In the case where $G$ is a tree, the structure of $\mathcal{H}_q(G)$ as a $A_G$-module was explored by the author \cite{R}. It is shown in that work that $\mathcal{H}_q(G)$ decomposes as a direct sum of graded shifts of square-free monomial ideals. In the work \cite{ADK}, An, Drummond-Cole and Knudsen also explore $\mathcal{H}_q(G)$ in the case where $G$ has maximum valency 3.

For the purpose of doing computations later in this work, we will need to present a simplification of the Swiatkowski complex.

\begin{definition}
Let $G$ be a graph with no isolated vertices. For each vertex $v$ of $G$ we may enumerate the half-edges containing $v$ by $\{h_i\}$. Define a bigraded subgroup of $S(v)$ by,
\[
\widetilde{S}(v) := <\emptyset, h_{i,j}>_{i,j},
\]
where $h_{i,j} = h_i - h_j$. Then the \textbf{reduced Swiatkowski complex} is the submodule of $Sw(G)$ defined by
\[
\widetilde{Sw}(G) := A_G \otimes \bigotimes_{v} \widetilde{S}(v)
\]
\end{definition}

One observes that for any pair of half edges $h_i,h_j$ containing some fixed vertex $v$, one has
\[
\partial(h_{i,j}) = e(h_i) - e(h_j)
\]
It follows that $\widetilde{Sw}(G)$ is indeed a differential bigraded submodule of the Swiatkowski complex. 

\begin{proposition}[An, Drummond-Cole, Knudsen, Proposition 4.9 \cite{ADK}]
If $G$ does not contain any isolated vertices, then the inclusion
\[
\widetilde{Sw}(G) \hookrightarrow Sw(G)
\]
is a quasi-isomorphism.
\end{proposition}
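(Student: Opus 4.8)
The plan is to deduce the claim from the stronger assertion that the quotient complex $Sw(G)/\widetilde{Sw}(G)$ is acyclic, which I would establish by reducing one vertex at a time. Enumerate $V(G) = \{v_1, \ldots, v_m\}$ and, for $0 \le k \le m$, set
\[
Sw^{(k)}(G) := A_G \otimes \bigotimes_{i \le k} \widetilde{S}(v_i) \otimes \bigotimes_{i > k} S(v_i),
\]
so that $Sw^{(0)}(G) = Sw(G)$ and $Sw^{(m)}(G) = \widetilde{Sw}(G)$. The full differential decomposes as $\partial = \sum_i \partial_{v_i}$, where $\partial_{v_i}$ replaces a half-edge $h \in H(v_i)$ occurring in the $i$-th slot by $x_{e(h)}\,\emptyset - v(h)$ and leaves the other $S(v_j)$-slots untouched; since $\partial_{v_i}$ carries $\widetilde S(v_i)$ into $A_G \otimes \widetilde S(v_i)$ — indeed $\partial_{v_i}(h_j - h_{j'}) = (x_{e(h_j)} - x_{e(h_{j'})})\,\emptyset$ — each $Sw^{(k)}(G)$ is a subcomplex of $Sw^{(k-1)}(G)$. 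As the groups involved are free over $\Z$ and the inclusions $\widetilde S(v_i) \hookrightarrow S(v_i)$ split, the successive quotient is
\[
Q_k := Sw^{(k-1)}(G)/Sw^{(k)}(G) \;\cong\; A_G \otimes \bigotimes_{i<k}\widetilde S(v_i) \otimes \big(S(v_k)/\widetilde S(v_k)\big) \otimes \bigotimes_{i>k} S(v_i),
\]
with induced differential $\sum_i \overline{\partial_{v_i}}$, so it suffices to show that each $Q_k$ is acyclic.

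For the local step I would fix $k$, write $v = v_k$ with half-edges $h_1, \ldots, h_d$, and note that the hypothesis of no isolated vertices is exactly what forces $d \ge 1$. In $S(v)/\widetilde S(v)$ the classes $\emptyset$ and $h_j - h_{j'}$ die, so this factor is free of rank one in each of bidegrees $(0,1)$ and $(1,1)$, spanned by the common images $\bar v$ of $v$ and $\bar h$ of the $h_j$; moreover the induced differential sends $\bar h \mapsto x_{e(h_1)}\,\emptyset - v \equiv -\bar v$ (the term $x_{e(h_1)}\emptyset$ vanishing since $\emptyset \in \widetilde S(v)$). Thus, as a complex of free $A_G$-modules, $S(v)/\widetilde S(v)$ is isomorphic to $A_G \xrightarrow{-\mathrm{id}} A_G$ in homological degrees $1$ and $0$, hence contractible via the homotopy $s_k$ with $s_k(\bar v) = -\bar h$ and $s_k(\bar h) = 0$. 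Extending $s_k$ to act on the $v_k$-slot of $Q_k$ (identically on the other slots, with the appropriate Koszul sign) produces a map $s$ satisfying $\partial_{v_k} s + s\,\partial_{v_k} = \mathrm{id}$, while $s$ anticommutes with each $\partial_{v_i}$ for $i \ne k$ since these act on disjoint tensor slots and both are odd. Therefore $\partial s + s\,\partial = \mathrm{id}$ on $Q_k$, so $Q_k$ is contractible, hence acyclic.

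Feeding this into the long exact homology sequence of $0 \to Sw^{(k)}(G) \to Sw^{(k-1)}(G) \to Q_k \to 0$ shows that each inclusion $Sw^{(k)}(G) \hookrightarrow Sw^{(k-1)}(G)$ is a quasi-isomorphism, and composing over $k$ yields the result. The part needing the most care is the bookkeeping: presenting $Sw(G)$ via the family of local differentials $\partial_{v_i}$, confirming that each $\widetilde S(v_i)$ is stable under its own local differential so that the chain $Sw^{(k)}(G)$ really consists of subcomplexes, and tracking the Koszul signs closely enough to verify that the extended homotopy $s$ interacts correctly with the remaining local differentials. The identification of $S(v)/\widetilde S(v)$ together with its induced differential, by contrast, is a direct computation.
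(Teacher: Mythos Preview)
The paper does not actually supply a proof of this proposition; it is merely quoted from \cite{ADK} and used as a black box. So there is nothing to compare your argument against within the paper itself.

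That said, your argument is correct and is essentially the standard way to prove such a statement. The filtration $Sw^{(k)}(G)$ is well defined because each local differential $\partial_{v_i}$ keeps the $v_i$-slot inside $A_G \otimes \widetilde S(v_i)$ (as you note, $\partial_{v_i}(h_j-h_{j'}) = (x_{e(h_j)}-x_{e(h_{j'})})\emptyset$) and leaves all other slots untouched; freeness of the factors over $\Z$ gives the claimed description of the successive quotients $Q_k$. The key computation that $S(v)/\widetilde S(v)$ is two-dimensional with $\partial(\bar h) = -\bar v$ is right, since the term $x_{e(h_j)}\emptyset$ dies modulo $\widetilde S(v)$; this is also precisely where the hypothesis ``no isolated vertices'' enters, since for an isolated $v$ one has $\widetilde S(v)=\Z\emptyset$ and $S(v)/\widetilde S(v)=\Z\bar v$ with zero differential, which is not acyclic. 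Your contracting homotopy $s_k$ then works as claimed: it is $A_G$-linear and acts only on the $v_k$-slot, so it anticommutes with the remaining $\partial_{v_i}$ by the Koszul sign rule, and the identity $\partial s + s\partial = \mathrm{id}$ on $Q_k$ follows. The long exact sequences finish the job.
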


\section{$\FI$-algebras and edge-linear $\FI$-graphs}

\begin{definition}
For each integer $n \geq 0$, we will write $[n]$ to denote the finite set $[n] := \{1,\ldots,n\}$. The category $\FI$ is that whose objects are the sets $[n]$ and whose morphisms are set injections.

If $k$ is a commutative Noetherian ring, then an \textbf{$\FI$-module over $k$} is a functor $M$ from $\FI$ to the category of $k$-modules. We will often write $M_n$ to denote $M([n])$.
\end{definition}

The representation theory of $\FI$ has recently risen to popularity due in large part to the seminal works of authors such as Church, Ellenberg, Farb, and Nagpal \cite{CEF,CEFN}, Djament and Vespa \cite{D,DV}, Sam and Snowden \cite{SS,SS2}, and many others. In this work we will be primarily concerned with two constructions stemming from $\FI$: $\FI$-algebras and $\FI$-graphs.

\begin{definition}\label{fgdef}
If $k$ is a commutative Noetherian ring, an $\FI$-algebra is a functor $\mathbf{A}$ from $\FI$ to the category of $k$-algebras. Given an $\FI$-algebra $\mathbf{A}$, an \textbf{$\mathbf{A}$-module} is an $\FI$-module $M$ satisfying the following:
\begin{enumerate}
\item $M_n$ is an $\mathbf{A}_n$-module for each $n$;
\item if $f:[n] \hookrightarrow [m]$ is an injection of sets then the following diagram commutes for all $a \in A_n$
\[
\begin{CD}
M_n @> M(f)>> M_m\\
@V a VV      @V A(f)(a) VV\\
M_n @> M(f) >> M_m
\end{CD}
\]
\end{enumerate}
An $\mathbf{A}$-module $M$ is said to be \textbf{finitely generated} so long as there is a finite collection of elements $\{v\} \subseteq \bigoplus M_n$ which no proper sub $\mathbf{A}$-module contains. We say that a finitely generated $\mathbf{A}$-module $M$ is \textbf{Noetherian} if every sub $\mathbf{A}$-module of $M$ is also finitely generated. We say that $\mathbf{A}$ is itself \textbf{Noetherian} if all finitely generated modules over $\mathbf{A}$ are Noetherian.
\end{definition}

\begin{example}
The simplest example of an $\FI$-algebra is that whose points are defined by $\mathbf{A}_n = k$, and whose transition maps are all the identity. In this case $\mathbf{A}$-modules are no different than $\FI$-modules over $k$.

The second example of an $\FI$-algebra, and the example which will be the fundamental object of study in this paper, is that given on points by $\mathbf{A}_n = k[x_1,\ldots,x_n]$, and whose transition maps act on indices in the obvious way.

Note that for any fixed $d \geq 0$, and any $\FI$-algebra $\mathbf{A}$, we may always define a new algebra given by the tensor power
\[
(\mathbf{A}^{\otimes d})_n := (\mathbf{A}_n)^{\otimes d} 
\]
\end{example}

It is famously known that all finitely generated $\FI$-modules over Noetherian rings are themselves Noetherian (see \cite{CEFN} for one such proof). However, finitely generated modules over a general $\FI$-algebra do not share this property. This is true even if we assume that $\mathbf{A}$ is itself finitely generated. Despite this, one can prove the following.

\begin{theorem}[Nagel and R\"omer, Corollary 7.9 \cite{NR}]\label{noeth}
If $k$ a commutative Noetherian ring, the $\FI$-algebra $\mathbf{A}$ given by $\mathbf{A}_n = k[x_1,\ldots,x_n]$ is Noetherian. More generally, for any fixed integer $d \geq 0$, $\mathbf{A}^{\otimes d}$ is Noetherian.
\end{theorem}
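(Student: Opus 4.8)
The goal is to prove that the $\FI$-algebra $\mathbf{A}$ with $\mathbf{A}_n = k[x_1,\dots,x_n]$ (and more generally its tensor powers $\mathbf{A}^{\otimes d}$) is Noetherian, i.e.\ every finitely generated $\mathbf{A}$-module has all submodules finitely generated. The strategy I would follow is the classical ``Gr\"obner-theoretic'' approach to Noetherianity for combinatorial categories, as developed by Sam--Snowden and refined by Nagel--R\"omer. The key is to realize $\mathbf{A}$-modules as modules over a single (large, non-Noetherian-as-a-ring but still ``combinatorially Noetherian'') object, and to produce a monomial order on the relevant free modules that is a well-order on the relevant index set, so that the usual argument (a strictly increasing chain of submodules would produce a strictly increasing chain of initial submodules, contradicting Dickson-type finiteness of antichains) goes through.

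First I would set up the combinatorial category that governs $\mathbf{A}$-modules: an $\mathbf{A}$-module is the same as a module over the ``category algebra'' whose morphisms from $[m]$ to $[n]$ are pairs consisting of an injection $[m]\into[n]$ together with a monomial in $k[x_1,\dots,x_n]$ supported (after composition) appropriately — concretely one works with the category $\OI$ (or $\FI$) enriched by the polynomial generators, which Nagel--R\"omer phrase in terms of the polynomial functor $\mathbf{X}$. The point is that $\mathbf{A}$ is generated in a single degree by the variables, so a free $\mathbf{A}$-module has an explicit $k$-basis indexed by pairs (an $\FI$-morphism out of a fixed generator, a monomial), and I would describe submodules via their ``leading terms'' in this basis.

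Next, the heart of the argument: I would equip the indexing set of basis elements of a finitely generated free $\mathbf{A}$-module with a well-partial-order (every subset has finitely many minimal elements and no infinite antichains) compatible with the $\FI$-action and with multiplication by variables. This is where one uses that injections of finite sets, together with the monomial exponent vectors, can be ordered by a Higman/Dickson-type lemma: reducing to $\OI$ (order-preserving injections) via the $\Sn_n$-symmetry, one gets that the relevant poset of ``words with exponents'' is a well-partial-order. Given such an order, a standard argument shows that if $N_1 \subseteq N_2 \subseteq \cdots$ were a non-stabilizing chain of submodules of a finitely generated $\mathbf{A}$-module, passing to initial modules and picking leading terms would yield an infinite antichain in the index poset — a contradiction. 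The passage to $\mathbf{A}^{\otimes d}$ is then formal: $d$-fold tensor products of the polynomial generators just multiply the exponent data by $d$, and a finite product of well-partial-orders is a well-partial-order, so the same Gr\"obner argument applies verbatim.

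The main obstacle I expect is precisely the construction of the well-partial-order on the monomial index set that is simultaneously \emph{compatible with the $\FI$-transition maps} — an injection $[m]\into[n]$ both relabels variables and, crucially, \emph{enlarges} the variable set, so a naive degreewise or lexicographic order will not be preserved, and one must be careful that ``inserting new variables with exponent zero'' respects the order. Getting this compatibility right (and checking it interacts correctly with multiplication by the $x_i$) is the technical crux; once it is in place, the reduction from a chain of submodules to an antichain, and hence the Noetherianity statement including the tensor-power case, follows from the standard machinery.
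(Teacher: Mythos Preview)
The paper does not prove this theorem at all: it is simply quoted from Nagel and R\"omer \cite[Corollary~7.9]{NR} and used as a black box, so there is no in-paper argument to compare your proposal against. Your sketch is a reasonable outline of the Gr\"obner-theoretic/well-partial-order approach that Nagel--R\"omer (building on Sam--Snowden) actually use, and the technical crux you identify --- compatibility of the monomial order with the $\FI$ transition maps --- is indeed the main point of their proof; but for the purposes of this paper no such argument is needed or given.
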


\begin{remark}
It is still open whether the Noetherian property of $\FI$-algebras is preserved by tensor products. The above special case was proven by Nagel and R\"omer in the provided source.
\end{remark}

Theorem \ref{noeth} has a variety of interesting consequences in the case where we keep track of the natural grading on $\mathbf{A}_n$.

\begin{definition}
We say that an $\FI$-algebra $\mathbf{A}$ over a field $k$ is \textbf{standard graded} if every algebra $\mathbf{A}_n$ is a graded $k$-algebra generated in degree 1 with $\mathbf{A}_{n,0} = k$, and every induced map $A(f)$ is a morphism of graded $k$-algebras. We say that an $\mathbf{A}$-module $M$ is \textbf{graded} if each $M_n$ is a graded module over $\mathbf{A}_n$, and each induced map $M(f)$ preserves this grading.

If $M$ is a finitely generated graded $\mathbf{A}$-module, then for each $p,n \geq 0, j \in \Z$ we define $\beta_{n,p,j}(M)$ to be the Betti number
\[
\beta_{n,p,j}(M) := \dim_k (\Tor_p^{\mathbf{A}_n}(M_n,k)_j).
\]
\end{definition}

\begin{corollary}[Nagel and R\"omer, Theorem 7.7 \cite{NR}]\label{betti}
Let $\mathbf{A}$ be a finitely generated standard graded $\FI$-algebra over a field $k$. Then for any integers $p,j \geq 0$, and any finitely generated graded $\mathbf{A}$-module $M$, the function
\[
n \mapsto \beta_{n,p,j}(M)
\]
agrees with a polynomial for $n \gg 0$. Moreover, there exists a finite list of integers
\[
j_0(M,p) < \ldots < j_t(M,p)
\]
such that for all $n \gg 0$
\[
\beta_{n,p,j}(M_n) \neq 0 \iff j \in \{j_0(M,p), \ldots, j_t(M,p)\}.
\]
\end{corollary}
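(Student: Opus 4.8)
The plan is to reduce the statement, which is \cite[Theorem 7.7]{NR}, to two standard finiteness properties of finitely generated $\FI$-modules over the field $k$: Noetherianity \cite{CEFN} and eventual polynomiality of the Hilbert function \cite{CEF,CEFN}. First I would record that $\mathbf{A}$ is Noetherian. Since $\mathbf{A}$ is finitely generated and standard graded, it may be generated as an $\FI$-algebra by finitely many elements of degree $1$, hence is a quotient $\FI$-algebra of $\mathbf{A}^{\otimes c}$ (with $\mathbf{A}_n = k[x_1,\dots,x_n]$) for some $c\ge 0$; as $\mathbf{A}^{\otimes c}$ is Noetherian by Theorem \ref{noeth} and every finitely generated $\mathbf{A}$-module is a fortiori a finitely generated $\mathbf{A}^{\otimes c}$-module, $\mathbf{A}$ is Noetherian.

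For $d\ge 0$ and $e\in\Z$ let $\mathbf{P}(d,e)$ denote the free graded $\mathbf{A}$-module with $\mathbf{P}(d,e)_n=\mathbf{A}_n\otimes_k k[\Hom_\FI([d],[n])]$, the generating factor placed in internal degree $e$, with $\mathbf{A}_n$ acting on the left and $\FI$ acting diagonally. A choice of finitely many homogeneous generators of $M$ produces a surjection onto $M$ from a finite direct sum $\mathbf{F}^{(0)}=\bigoplus_i\mathbf{P}(d_{0,i},e_{0,i})$; by the Noetherianity just noted its kernel is again finitely generated, and iterating gives a resolution $\cdots\to\mathbf{F}^{(1)}\to\mathbf{F}^{(0)}\to M\to 0$ in which each $\mathbf{F}^{(p)}$ is a finite direct sum of shifted $\mathbf{P}(d,e)$'s. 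Because $\Hom_\FI([d],[n])$ is a finite set, each $\mathbf{P}(d,e)_n$ is a finite-rank free graded $\mathbf{A}_n$-module, and since evaluation at $[n]$ is exact, $\mathbf{F}^{(\bullet)}_n\to M_n$ is a (generally non-minimal) free resolution of $M_n$ over $\mathbf{A}_n$. Hence $\Tor_p^{\mathbf{A}_n}(M_n,k)=H_p(\mathbf{F}^{(\bullet)}_n\otimes_{\mathbf{A}_n}k)$; and since $\mathbf{P}(d,e)_n\otimes_{\mathbf{A}_n}k=k[\Hom_\FI([d],[n])]$ in internal degree $e$, exactness of evaluation identifies $\Tor_p^{\mathbf{A}_n}(M_n,k)_j$, for every $n$ and $j$, with $(T_{p,j})_n$, where $T_{p,j}:=H_p\big(\mathbf{F}^{(\bullet)}\otimes_{\mathbf{A}}k\big)_j$ is an $\FI$-module over $k$. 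Each $\mathbf{F}^{(p)}\otimes_{\mathbf{A}}k$ is a finite direct sum of shifted representable $\FI$-modules $k[\Hom_\FI([d],-)]$, so it is finitely generated over $k$ and concentrated in finitely many internal degrees; by $\FI$-Noetherianity \cite{CEFN} the subquotient $T_{p,j}$ is then a finitely generated $\FI$-module over $k$, and $T_{p,j}=0$ for all but finitely many $j$.

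To conclude, note $\beta_{n,p,j}(M)=\dim_k(T_{p,j})_n$. For each of the finitely many $j$ with $T_{p,j}\neq 0$, the function $n\mapsto\dim_k(T_{p,j})_n$ agrees with a polynomial $P_{p,j}$ for $n\gg 0$ \cite{CEF,CEFN}, giving the first claim. Since these values are nonnegative integers, each $P_{p,j}$ is either identically zero or has positive leading coefficient, so $\dim_k(T_{p,j})_n$ is either eventually zero or eventually positive; choosing a single threshold valid for all the finitely many relevant $j$, the set $\{\,j:\beta_{n,p,j}(M)\neq 0\,\}$ is a fixed finite set for $n\gg 0$, which we list as $j_0(M,p)<\cdots<j_t(M,p)$.

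The crux is the Noetherianity input of Theorem \ref{noeth}: without it the syzygy modules $\ker(\mathbf{F}^{(p)}\to\mathbf{F}^{(p-1)})$ need not be finitely generated — indeed finitely generated modules over a general finitely generated $\FI$-algebra fail to be Noetherian, which is exactly the obstruction flagged in the remark following Theorem \ref{mainthmab}. The remaining ingredients — exactness of evaluation, freeness of $\mathbf{P}(d,e)_n$ over $\mathbf{A}_n$, and the two cited properties of finitely generated $\FI$-modules over a field — are routine.
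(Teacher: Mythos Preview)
The paper does not supply its own proof of this corollary; it is quoted directly from \cite[Theorem~7.7]{NR}, with only the remark that the polynomiality assertion is implicit in Nagel--R\"omer's argument. Your write-up is therefore not being compared to anything in the present paper, and in fact your outline is exactly the standard strategy one would expect Nagel--R\"omer to use: build an $\mathbf{A}$-free resolution of $M$ by induced modules $\mathbf{P}(d,e)$, evaluate at $[n]$ to get an honest free $\mathbf{A}_n$-resolution of $M_n$, tensor down to $k$, and identify each graded piece of $\Tor$ as a finitely generated $\FI$-module over $k$, to which the Church--Ellenberg--Farb--Nagpal polynomiality theorem applies. The logic from that point on is clean and correct.

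There is one genuine soft spot, in your first paragraph. You assert that a finitely generated standard graded $\FI$-algebra is automatically a quotient of the polynomial $\FI$-algebra $\mathbf{A}^{\otimes c}$ for some $c$. This is not true in general: the degree-$1$ part of such an algebra is a finitely generated $\FI$-module, hence a quotient of a finite sum of representables $k[\Hom_{\FI}([d_i],-)]$, but the $d_i$ need not all equal $1$. For instance $\Sym\big(k[\Hom_{\FI}([2],-)]\big)$ has $n(n-1)$ algebra generators at level $n$, so it cannot be a quotient of any $\mathbf{A}^{\otimes c}$ (which has only $cn$). This is precisely the phenomenon the paper flags in the remark after Theorem~\ref{mainthmab}: outside the edge-linear regime, Noetherianity of the ambient $\FI$-algebra is genuinely open. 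So either Nagel--R\"omer's hypothesis is slightly stronger than the paraphrase here (e.g.\ Noetherianity is assumed, or ``finitely generated'' is meant in a restricted sense forcing a presentation by $\mathbf{A}^{\otimes c}$), or one should simply take Noetherianity as an input. For the applications in this paper the point is moot, since Proposition~\ref{algfg} shows directly that $\mathbf{A}_{G_\bullet}$ is a quotient of $\mathbf{A}^{\otimes d}$; but as a proof of the corollary in the stated generality, your first step needs that extra justification or hypothesis.
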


\begin{remark}
The first part of Corollary \ref{betti} does not explicitly appear in \cite{NR}, although it is a clear consequence of the proof of \cite[Theorem 7.7]{NR}.
\end{remark}

One way to think about Corollary \ref{betti} is that if $M$ is a finitely generated graded $\mathbf{A}$-module, then in each fixed degree $p$ the $p$-th syzygies eventually stabilize. One should note, however, that it is not the case that this stabilization is uniform across all $p$. Namely, one should not expect the regularity of $M_n$ to be eventually independent of $n$.

\textbf{For the remainder of the paper, we will reserve $\mathbf{A}$ to mean the $\FI$-algebra defined by $\mathbf{A}_n = \Z[x_1,\ldots,x_n]$.}

The second $\FI$-structure which will be important to us are $\FI$-graphs.

\begin{definition}\label{figraphdef}
An $\FI$-graph is a functor $G_\dt$ from $\FI$ to the category of graphs with graph homomorphisms. As with $\FI$-algebras, we will often write $G_n$ to denote $G_\dt([n])$ and $G(f)$ to denote $G_\dt(f)$. We say that $G_\dt$ is finitely generated if for $n \gg 0$, the vertex set $V(G_{n+1})$ is equal to $\cup_f G(f)(V(G_n))$, where the union is over injections $f:[n] \hookrightarrow [n+1]$.

We say that $G_\dt$ is torsion-free if the induced maps $G(f)$ are injective for all $f$.
\end{definition}

\begin{example}\label{exlin}
The most typical example of an $\FI$-graph is the complete graph $K_n$. This is the graph whose vertices are labeled by $[n]$, and whose edges are labeled by pairs $\{i,j\}$ with $i \neq j$.

For the purposes of this paper a more relevant example is given as follows. Let $(G,v_G)$ and $(H,v_H)$ be any pair of graphs with a choice of vertex. Then we define
\[
G_n := G \bigvee^n H.
\]
For any injection, the induced map on $G_\dt$ is defined by fixing the copy of $G$, and permuting the copies of $H$. For example, if $G$ is a single vertex, and $H$ is an edge, then $G_n$ is the star graph, which has a unique essential vertex of degree $n$. This example was first discussed in the context of $\FI$-graphs by L\"utgehetmann in \cite{L}, although he does not use the language of $\FI$-graphs.
\end{example}

$\FI$-graphs were first introduced by the author and Graham White in \cite{RW}. Configuration spaces of $\FI$-graphs are shown to exhibit a variety of stable behaviors in that work, extending results of L\"utgehetmann from \cite{L}. In this work we combine these stabilization results with the aforementioned $A_G$-action.

\begin{definition}\label{edgelin}
Let $G_\dt$ be a finitely generated $\FI$-graph. Then it is proven in \cite{RW} that the function
\[
n \mapsto |E(G_n)|
\]
agrees with a polynomial for all $n \gg 0$. We say that $G_\dt$ is \textbf{edge-linear} if it is finitely generated, and $|E(G_n)|$ agrees with a linear polynomial for all $n \gg 0$.
\end{definition}

\begin{example}
The second example given in Example \ref{exlin} is edge-linear. Moreover, for any fixed $m \geq 1$ the $\FI$-graph defined by $G_n = K_{n,m}$, the complete bipartite graph on $(n+m)$-vertices, is also edge-linear. 

Both of these examples can be thought of as being construct via the following gluing procedure. Let $G$ and $G'$ be two graphs, each sharing a common subgraph $H$. Then for any $n \geq 0$, we obtain a new graph
\[
G \bigsqcup_{H}^n G'
\]
by gluing $n$ copies of $G'$ to $G$ along $H$. This family of graphs may be given an $\FI$-graph structure, where the symmetric group acts by permuting the copies of $G'$, while fixing $G$. Graphs of this kind were originally studied by L\"utgehetmann in \cite{L}. The Classification Theorem \ref{class} essentially implies that all edge-linear $\FI$-graphs are isomorphic to something of the form $G \bigsqcup_H^\dt G'$, for some $G,G',H$.
\end{example}
 
Edge-linear $\FI$-graphs will be useful in this work due to their connection to the $\FI$-algebra $\mathbf{A}$. We explore this connection as a corollary to the following classification result.

\begin{theorem}\label{class}
Let $G_\dt$ be an edge-linear $\FI$-graph. Then there exists a pair of graphs $G,G'$, with a common subgraph $H$, such that for all $n \gg 0$
\[
G_n \cong G \bigsqcup_H^n G'
\]
\end{theorem}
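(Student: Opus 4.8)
The plan is to prove this by carefully analyzing the combinatorial structure of an edge-linear $\FI$-graph. First I would recall from \cite{RW} the general structure theory of finitely generated $\FI$-graphs: for $n \gg 0$, the vertex set $V(G_n)$ decomposes into ``orbits'' under the $\FI$-action, and each such orbit is parametrized by a subset of $[n]$ of some fixed size (equivalently, a tuple of distinct labels together with a stabilizing subgroup). The same is true for the edge set $E(G_n)$: for $n \gg 0$, $E(G_n)$ is a finite disjoint union of orbits, each orbit $\mathcal{O}_i$ being the image of a single edge under all injections $f$, and hence having cardinality equal to a polynomial in $n$ of degree $d_i$, where $d_i$ is the number of distinct labels appearing among the two endpoints of a representative edge (more precisely, $d_i$ is the ``support size'' of that edge orbit). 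Thus $|E(G_n)| = \sum_i \binom{n}{d_i}|\text{Aut-correction}|$, a sum of polynomials of degrees $d_i$.

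The key step is then: since $G_\dt$ is edge-linear, $|E(G_n)|$ agrees with a \emph{linear} polynomial for $n \gg 0$, which forces every edge orbit to have support size $d_i \leq 1$. An edge orbit of support size $0$ consists of a single edge whose presence does not depend on $n$ and which is fixed by the whole symmetric group action; collecting all such edges (together with the finitely many vertices of support size $0$, and all vertices and edges of support size $\leq 1$ that are ``shared'' structure) gives a fixed graph, which will be the core $G$ together with the shared subgraph $H$. An edge orbit of support size exactly $1$ consists, for each $i \in [n]$, of a single edge $e_i$ whose structure is a ``translate'' of a fixed local picture; all the vertices and edges of support size $1$ assemble into $n$ isomorphic copies of a fixed graph $G'$, glued to the support-$0$ part along the support-$0$ vertices they touch, i.e.\ along $H$. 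Running the $\FI$-action through this identification shows it permutes the $n$ copies of $G'$ and fixes $G$, which is exactly the structure of $G \bigsqcup_H^n G'$. I would make the identification of $H$ precise as the full subgraph of $G_n$ on the vertices of support size $0$ that lie in the closure of the support-$1$ part (equivalently, $H = G \cap G'$ inside $G_n$).

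I expect the main obstacle to be bookkeeping rather than conceptual: one must be careful that a support-$1$ \emph{vertex} orbit and a support-$1$ \emph{edge} orbit indexed by the same label $i$ are assembled into the \emph{same} copy $G'_i$ of $G'$, and that edges of support size $0$ are not accidentally double-counted between $G$ and the gluing locus $H$. A second subtlety is that ``support size $\leq 1$'' is exactly the right bound only after one checks that no cancellation can occur in $|E(G_n)| = \sum_i (\text{polynomial of degree } d_i)$ --- but since all these polynomials are nonnegative (they count orbit sizes) and have positive leading coefficients, no cancellation is possible, so the degree of the sum is $\max_i d_i$, and edge-linearity forces this maximum to be $\leq 1$. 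Finally, I would note that the isomorphism $G_n \cong G \bigsqcup_H^n G'$ is asserted only for $n \gg 0$, which is consistent with the fact that the orbit decomposition of $V(G_n)$ and $E(G_n)$ stabilizes only in that range; the finitely many small values of $n$ are exceptional and are excluded in the statement.
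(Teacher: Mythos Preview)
Your proposal is correct and follows essentially the same route as the paper's proof: both arguments reduce to showing that every $\Sn_n$-orbit of edges (and of vertices) in $G_n$ has ``support size'' at most $1$, and then read off $G$, $G'$, and $H$ from the orbit decomposition. The one organizational difference is that the paper first bounds the \emph{vertex} growth --- using the elementary inequality $|V(G_n)| \leq 2|E(G_n)|/\delta(G_n)$ to deduce that $|V(G_n)|$ is also at most linear, hence (via the classification of $\FI$-sets in \cite{RSW}) that every vertex orbit is either a fixed point or a copy of $[n]$ --- and only then analyzes which $\Sn_n$-equivariant edge relations can occur between such orbits; you instead bound the edge orbits directly via the positivity/no-cancellation argument and let the vertex bound fall out implicitly. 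Your order is fine, but you should add one sentence making the vertex bound explicit (either by the degree inequality as in the paper, or by noting that an endpoint of a support-$\leq 1$ edge must itself have support $\leq 1$ in the $\FI$-set sense), since as written you invoke ``vertices of support size $0$'' and ``support-$1$ vertex orbits'' without having argued that no larger vertex orbits exist.
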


\begin{proof}
We begin by observe that an edge-linear $\FI$-graph  must also have linear growth in its number of vertices. Indeed, this follows from the graph-theoretic inequality
\[
|V(G)| \leq 2|E(G)|/\delta(G)
\]
where $\delta(G)$ is the minimum degree of a vertex of $G$. Appealing to the classification theorem for $\FI$-sets \cite[Theorem A]{RSW}, we conclude that $V(G_n)$ is isomorphic to a disjoint union of some number of copies of the $\Sn_n$-action on $[n]$, along with some number of invariant vertices.

The only $\Sn_n$-equivariant relation on $[n]$ is the identity relation $\{(i,i) \mid i \in [n]\}$, and the anti-identity relation $\{(i,j) \mid i \neq j\}$. In the latter case, the number of edges does not grow linearly in $n$. Therefore, there are no edges between two vertices in the same $\Sn_n$-orbits. If we write two distinct $\Sn_n$-orbits of vertices as
\[
\Ob_1 = \{(i,1) \mid i \in [n]\}, \Ob_2 = \{(i,2) \mid i \in [n]\}
\]
then the same argument before shows that there cannot be edges between vertices of the form $(i,1)$ and $(j,2)$ if $i \neq j$. Finally, if there is an edge between $(i,1)$ and $(i,2)$ for some $i$, then there must be an edge between $(i,1)$ and $(i,2)$ for all $i$. This then leads into the construction of the desired graphs $G,G',$ and $H$: let $\widetilde{G}$ be the graph whose vertices are labeled by $\Sn_n$-orbits of vertices of $G_n$, and whose edges indicate the existence of an edge in $G_n$ between two elements of the orbits. We let $G$ be the induced subgraph of $\widetilde{G}$ whose vertices correspond to singleton orbits, let $G'$ be any subgraph of $\widetilde{G}$ containing all the non-singleton orbits and all edges adjacent to these orbits, and let $H$ be the intersection in $\widetilde{G}$ of $G$ and $G'$.
\end{proof}

\begin{remark}
It was revealed in the above proof that the choices of $G'$ and $H$ are not necessarily unique, though $G$ is unique.
\end{remark}

The asymptotic nature of Theorem \ref{class} is necessary. For instance, one may define an $\FI$-graph which agrees with the bipartite graph $K_{n,n}$ for $n \leq 10$, and collapses to a single edge for all larger $n$. This observation inspires the following definition.

\begin{definition}
Let $G_\dt$ be an edge-linear $\FI$-graph, and let $m$ be sufficiently large so that $G_n \cong G \bigsqcup_H^n G'$ for all $n \geq m$, and for some $G,G',H$. The \textbf{tail} of $G_\dt$, $G_\dt^{\gg}$, is the edge-linear $\FI$-graph,
\[
G_n^{\gg} := \begin{cases} \emptyset &\text{ if $n < m$}\\ G_n &\text{ otherwise.}\end{cases}
\]

We also define an $\FI$-algebra $\mathbf{A}_{G_{\dt}}$ by setting
\[
\mathbf{A}_{G_{\dt},n} := A_{G \bigsqcup_H^n G'}
\]
for all $n \geq 0$, where $G,G'$, and $H$ are as above.
\end{definition}

\begin{remark}
While the tail of an $\FI$-graph depends on the choice of $m$, this choice will not matter in what follows.
\end{remark}

\begin{proposition}\label{algfg}
Let $G_\dt$ be an edge-linear $\FI$-graph. Then there is some $d \geq 0$ such that $\mathbf{A}_{G_{\dt}}$ is a quotient of 
\[
\mathbf{A}^{\otimes d}.
\]
In particular, $\mathbf{A}_{G_{\dt}}$ is Noetherian.
\end{proposition}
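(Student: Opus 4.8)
The plan is to exhibit $\mathbf{A}_{G_\dt}$ explicitly as a quotient of $\mathbf{A}^{\otimes d}$ for a suitable $d$, after which Noetherianity follows immediately from Theorem \ref{noeth} together with the standard fact that a quotient of a Noetherian $\FI$-algebra is Noetherian (any finitely generated module over the quotient is, by restriction of scalars, a finitely generated module over $\mathbf{A}^{\otimes d}$, hence Noetherian, and submodules are detected the same way). So the whole content is the surjection.

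First I would invoke Theorem \ref{class}: since $G_\dt$ is edge-linear, for $n \gg 0$ we have $G_n \cong G \bigsqcup_H^n G'$ for fixed graphs $G, G', H$ with $H$ a common subgraph, and by definition $\mathbf{A}_{G_\dt, n} = A_{G \bigsqcup_H^n G'}$. The edge set of $G \bigsqcup_H^n G'$ decomposes as the edges of $G$ together with one copy of $E(G') \setminus E(H)$ for each of the $n$ glued copies of $G'$; the $\Sn_n$-action permutes these $n$ blocks and fixes the edges of $G$. Thus, writing $d = |E(G') \setminus E(H)|$, there is a natural identification $A_{G \bigsqcup_H^n G'} \cong A_G \otimes (\Z[x_1,\ldots,x_n])^{\otimes d}$ as $\Z$-algebras, compatibly with the $\FI$-structure on both sides (the $\FI$-structure on the right being the one on $\mathbf{A}^{\otimes d}$, tensored with the trivial constant algebra $A_G$). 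Since $A_G$ is a finitely generated polynomial ring over $\Z$, there is a further surjection $\mathbf{A}^{\otimes (d + |E(G)|)} \twoheadrightarrow A_G \otimes \mathbf{A}^{\otimes d}$: route the first $|E(G)|$ tensor factors onto $A_G$ (using the evaluation-at-finitely-many-variables map, or simply noting $A_G$ is a quotient of $\Z[x_1,\dots]$ and hence of the constant $\FI$-algebra $n \mapsto \Z[x_1,\dots,x_{|E(G)|}]$, which is a sub-$\FI$-algebra of $\mathbf{A}$) and the remaining $d$ factors identically onto $\mathbf{A}^{\otimes d}$. Composing gives $\mathbf{A}^{\otimes (d+|E(G)|)} \twoheadrightarrow \mathbf{A}_{G_\dt}$.

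The one genuine subtlety — and the step I expect to require the most care — is checking that this identification is an isomorphism \emph{of $\FI$-algebras}, i.e. that the transition maps match up. The transition map $A_{G(f)}$ for an injection $f : [n] \hookrightarrow [r]$ sends the $i$-th block of new $G'$-edges to the $f(i)$-th block, and this is exactly the transition map of $\mathbf{A}^{\otimes d}$ (applied blockwise, with the $d$ edges within a block in fixed correspondence), while fixing the $A_G$-part. One must be slightly careful about how $H$'s edges are treated — they belong to $G$ in the quotient $G \bigsqcup_H^n G'$ and are not duplicated, which is why we take $d = |E(G') \setminus E(H)|$ rather than $|E(G')|$ — and about the fact that, for small $n$, the hypothesis only gives the isomorphism asymptotically. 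But since a finitely generated $\FI$-module over a Noetherian $\FI$-algebra is determined up to the finitely-many-small-degrees ambiguity and Noetherianity is a statement about $n \gg 0$ behavior (equivalently: modifying an $\FI$-algebra in finitely many degrees does not affect Noetherianity, as the category of modules changes only in those degrees), the asymptotic statement suffices. Once the $\FI$-algebra isomorphism $\mathbf{A}_{G_\dt} \cong A_G \otimes \mathbf{A}^{\otimes d}$ is in hand, Proposition \ref{algfg} follows as above.
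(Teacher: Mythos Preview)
Your proof is correct and takes essentially the same route as the paper: invoke Theorem~\ref{class} to decompose $E(G_n)$ into $\Sn_n$-orbits (your $d = |E(G')\setminus E(H)|$ free orbits and $|E(G)|$ singletons are exactly the paper's $a$ and $b$), assign one tensor factor of $\mathbf{A}$ to each orbit to build the surjection, and deduce Noetherianity from Theorem~\ref{noeth} plus closure under quotients. Two minor points: the asymptotic worry about small $n$ is unnecessary since $\mathbf{A}_{G_\dt,n}$ is by definition $A_{G \bigsqcup_H^n G'}$ for \emph{all} $n \geq 0$, and your parenthetical that the constant algebra $n \mapsto \Z[x_1,\dots,x_{|E(G)|}]$ is a sub-$\FI$-algebra of $\mathbf{A}$ is false for $n < |E(G)|$ --- but your primary construction (one tensor factor per edge of $G$, each collapsed via $x_i \mapsto x$) is correct and is what the paper does.
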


\begin{proof}
Appealing to Theorem \ref{class}, it follows that $E(G_n)$ has $a+b$ $\Sn_n$-orbits: $a$ orbits isomorphic to the action of $\Sn_n$ on $[n]$, and $b$ singleton orbits. We conclude that $\mathbf{A}_{G_{\dt}}$ is isomorphic to a quotient of $\mathbf{A}^{\otimes (a+b)}$. Namely, the quotient which identifies the variables of the first $a$ tensor factors with the elements in the non-singleton orbits of $E(G_n)$, and identities the variables of the  final $b$ tensor factors with the $b$ singleton orbits.

The second part of the proposition follows from the fact that any quotient of a Noetherian $\FI$-algebra must be Noetherian, and Theorem \ref{noeth}.
\end{proof}

The goal of the next section is to show that if $G_\dt$ is an edge-linear $\FI$-graph, then the assignments
\[
n \mapsto \mathcal{H}_q(G_\dt)
\]
defines a finitely generated module over $\mathbf{A}_{G_{\dt}}$. To do this we will make critical use of the fact that $\mathbf{A}_{G_{\dt}}$ is Noetherian.

\section{The proofs of the main theorems}

For the remainder of this section we fix an edge-linear $\FI$-graph $G_\dt$. \textbf{In fact, we substitute $G_\dt$ by its tail $G^\gg_\dt$ so that $G_\dt$ is torsion free}. We will also assume that $G_n$ does not contain any isolated vertices for all $n$. In view of Corollary \ref{betti} and Proposition \ref{algfg}, it will suffice to show that the $\FI$-module
\[
\mathcal{H}_q(G_\dt) := \bigoplus_{n \geq 0} H_q(U\F_n(G_\dt))
\]
is finitely generated and graded over the $\FI$-algebra $\mathbf{A}_{G_\dt}$. We accomplish this by proving that the collection of reduced Swiatkowski complexes $\{\widetilde{Sw}(G_n)\}_{n \geq 0}$ can be extended to a complex of graded $\mathbf{A}_{G_\dt}$-modules, $\widetilde{Sw}(G_\dt)$. We will then show that the terms in the complex $\widetilde{Sw}(G_\dt)$ are finitely generated $\mathbf{A}_{G_\dt}$-modules. The Noetherian property (Proposition \ref{algfg}) will imply that $\mathcal{H}_q(G_\dt)$ is finitely generated, as desired.

\begin{lemma}
For any injection of sets $f:[n] \hookrightarrow [r]$, and any $q \geq 0$ the following diagram commutes
\[
\begin{CD}
Sw_q(G_r) @ > \partial >> Sw_{q-1}(G_r)\\
@A Sw(G(f)) AA            @A Sw(G(f)) AA\\
Sw_q(G_n) @> \partial >> Sw_{q-1}(G_n)
\end{CD}
\]
In particular, the collection of Swiatkowski complexes $\{Sw(G_n)\}_{n \geq 0}$ can be extended to a complex of graded $\mathbf{A}_{G_\dt}$-modules.
\end{lemma}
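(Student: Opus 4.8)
The plan is to establish the commutativity of the square and then observe that it immediately packages the Swiatkowski complexes into a complex over $\mathbf{A}_{G_\dt}$. The commutativity is essentially a formal consequence of functoriality of the Swiatkowski construction together with the product structure of the complex. First I would recall that, since $G_\dt$ is torsion-free (we have replaced it by its tail), each graph homomorphism $G(f)\colon G_n\to G_r$ is injective, so by the functoriality remark following the definition of $Sw$ it induces a morphism $Sw(G(f))\colon Sw(G_n)\to Sw(G_r)$ of differential bigraded $A_{G_n}$-modules (where $Sw(G_r)$ is viewed as an $A_{G_n}$-module via the algebra map $A_{G_n}\to A_{G_r}$ induced by $f$ on edge sets). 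In particular $Sw(G(f))$ commutes with the differential $\partial$, and it preserves the homological grading, hence restricts to maps $Sw_q(G_n)\to Sw_q(G_r)$ for each $q$. That is exactly the content of the displayed square, so the square commutes by construction.

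More concretely, if one wants to check this by hand rather than cite functoriality, I would unwind the definitions: a pure tensor in $Sw_q(G_n)$ is a product $a\otimes\bigotimes_v s_v$ with $a\in A_{G_n}$ and each $s_v\in Sw(v)$ a basis symbol, of which exactly $q$ are half-edge symbols; the map $Sw(G(f))$ sends $a$ to its image under the edge map and sends each symbol $s_v$ to the corresponding symbol at the vertex $G(f)(v)$ (with edges relabeled along $f$). The differential $\partial$ is a graded derivation that acts by $\partial(h)=e(h)-v(h)$ on each half-edge symbol and by zero on edges, vertices, and $\emptyset$, extended by the Leibniz rule with the standard Koszul signs. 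Both maps being algebra-respecting and $\partial$ being determined by its values on the generators $h$, the only thing to verify is $Sw(G(f))(e(h)-v(h)) = e(G(f)(h)) - v(G(f)(h))$, which holds because $G(f)$ is a graph homomorphism that sends the half-edge $h=(v,e)$ to the half-edge $(G(f)(v), G(f)(e))$; matching of signs is automatic since $Sw(G(f))$ preserves the homological grading.

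Having the commuting squares for all $f$ and all $q$, the collection $\{Sw(G_n)\}$ is by definition a functor from $\FI$ to complexes of abelian groups, i.e. a complex of $\FI$-modules. To upgrade this to a complex of $\mathbf{A}_{G_\dt}$-modules I would check the two conditions in Definition \ref{fgdef}: each $Sw_q(G_n)$ is visibly an $A_{G_n} = \mathbf{A}_{G_\dt,n}$-module, and the compatibility square relating the $\FI$-action to multiplication by $a\in \mathbf{A}_{G_\dt,n}$ is precisely the statement that $Sw(G(f))$ is a map of $A_{G_n}$-modules, i.e. $Sw(G(f))(a\cdot x) = A_{G_\dt}(f)(a)\cdot Sw(G(f))(x)$, which again is part of the functoriality remark (or is immediate from the explicit description above, since multiplication by $a$ only touches the polynomial factor). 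The differentials $\partial$ are $A_{G_n}$-linear since $\partial(e)=0$ and $\partial$ is a derivation, so $\partial$ is a morphism of $\mathbf{A}_{G_\dt}$-modules in each degree, and we conclude that $\widetilde{Sw}$'s unreduced analogue $Sw(G_\dt)$ is a complex of graded $\mathbf{A}_{G_\dt}$-modules.

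I do not anticipate a genuine obstacle here — the lemma is a bookkeeping statement whose only subtlety is making sure the bigrading and sign conventions for the differential are transported correctly by $Sw(G(f))$, and that the base-ring change $A_{G_n}\to A_{G_r}$ along $f$ is the one implicit in the definition of $\mathbf{A}_{G_\dt}$. The one point worth a sentence of care is that we are using torsion-freeness of $G_\dt$: without injectivity of the maps $G(f)$ the Swiatkowski construction is not functorial, which is exactly why we passed to the tail at the start of the section.
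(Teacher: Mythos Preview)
Your proposal is correct and follows the same approach as the paper: both verify commutativity by checking it on half-edge generators via the identity $\partial(Sw(G(f))(h)) = G(f)(e) - G(f)(v) = Sw(G(f))(\partial(h))$. You have supplied considerably more detail than the paper's two-line argument (the Leibniz-rule extension, the explicit check of the $\mathbf{A}_{G_\dt}$-module compatibility, the remark on torsion-freeness), but the underlying idea is identical.
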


\begin{proof}
This follows from the definition of $\partial$, as well as the definition of $Sw(G(f))$. Indeed, we have for any half edge $h = (v,e)$
\[
\partial(Sw(G(f))(h)) = \partial(G(f)(e),G(f)(v)) = G(f)(e) - G(f)(v) = Sw(G(f))(\partial(h)).
\]
\end{proof}

One observes that, for any injection $f:[n] \hookrightarrow [r]$, $Sw(G(f))$ restricts to a map $\widetilde{Sw}(G_n) \rightarrow \widetilde{Sw}(G_r)$. The above lemma therefore shows that the reduced Swiatokowski complex can also be extended to a complex of graded $\mathbf{A}_{G_\dt}$-modules.

\begin{lemma}
For each $q\geq 0$, the $\mathbf{A}_{G_\dt}$-module $\widetilde{Sw}_q(G_\dt)$ is finitely generated.
\end{lemma}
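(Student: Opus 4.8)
The plan is to strip off the polynomial ring and reduce to a statement purely about $\FI$-modules, where the Noetherianity of finitely generated $\FI$-modules over $\Z$ (\cite{CEFN}) is available. Concretely, for each $n$ the definition of the reduced Swiatkowski complex gives
\[
\widetilde{Sw}_q(G_n) \;=\; A_{G_n} \otimes_{\Z} P_q(n), \qquad P_q(n) := \bigoplus_{\substack{S \subseteq V(G_n)\\ |S| = q}}\ \bigotimes_{v \in S} \widetilde{S}_1(v),
\]
where $\widetilde{S}_1(v)$ denotes the degree-$(1,1)$ part of $\widetilde{S}(v)$, namely the subgroup of the free abelian group $\Z^{H(v)}$ spanned by the differences $h_i - h_j$ of half-edges at $v$ (the $\emptyset$'s contribute trivial tensor factors and so do not appear). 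This identification is natural in the $\FI$-variable: the transition map $Sw(G(f))$ acts as $\mathbf{A}_{G_\dt}(f)$ on the first factor and by relabelling half-edges on the second, so $\widetilde{Sw}_q(G_\dt) \cong \mathbf{A}_{G_\dt} \otimes_{\underline{\Z}} P_q$ as graded $\mathbf{A}_{G_\dt}$-modules, where $\underline{\Z}$ is the constant $\FI$-algebra. Since a finite generating set $x_1,\dots,x_m$ of an $\FI$-module $P$ yields the generating set $1 \otimes x_1, \dots, 1 \otimes x_m$ of $\mathbf{B} \otimes_{\underline{\Z}} P$ over any $\FI$-algebra $\mathbf{B}$ (using that each $\mathbf{B}(f)$ is a unital ring map), it is enough to prove that $P_q$ is a finitely generated $\FI$-module over $\Z$.

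To see this, note that because $G_n$ has no isolated vertices each $\widetilde{S}_1(v)$ is the ``sum-zero'' subgroup of $\Z^{H(v)}$, and since every group in sight is $\Z$-free hence flat, tensoring these inclusions produces an injection of $\FI$-modules
\[
P_q \;\into\; \bigoplus_{\substack{S \subseteq V(G_n)\\ |S| = q}}\ \bigotimes_{v \in S} \Z^{H(v)}.
\]
The right-hand side is the permutation $\FI$-module on the $\FI$-set whose value at $n$ is the collection of $q$-element sets of half-edges of $G_n$ lying over $q$ distinct vertices. Now invoke Theorem \ref{class}: writing $G_n = G \bigsqcup_H^n G'$, the set of half-edges of $G_n$ is a finitely generated $\FI$-set --- finitely many half-edges coming from $G$, forming singleton orbits, together with finitely many copies of the standard $\Sn_n$-set $[n]$ coming from the half-edges carried by the $n$ glued copies of $G'$. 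A $q$-element subset meets at most $q$ of these copy indices, so the $\FI$-set of such subsets is finitely generated as well; hence the displayed permutation module is a finitely generated $\FI$-module over $\Z$, and therefore Noetherian by \cite{CEFN}. Its sub-$\FI$-module $P_q$ is thus finitely generated, which completes the reduction. Finally $P_q$ is concentrated in internal degree $q$ and $\mathbf{A}_{G_\dt}$ is standard graded with grading-preserving transition maps, so $\widetilde{Sw}_q(G_\dt) = \mathbf{A}_{G_\dt} \otimes_{\underline{\Z}} P_q$ is finitely generated as a \emph{graded} $\mathbf{A}_{G_\dt}$-module.

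The only delicate point, and the place where the hypothesis that $G_\dt$ is edge-linear (via Theorem \ref{class}) is really used, is the $\FI$-set bookkeeping in the second paragraph: one must check that $n \mapsto \{\text{half-edges of } G_n\}$ and its refinement by $q$-subsets-over-distinct-vertices are genuinely finitely generated $\FI$-sets, and that the inclusion $P_q \into \bigoplus_S \bigotimes_v \Z^{H(v)}$ is natural in the $\FI$-variable (here one uses that an injective homomorphism of graphs sends the half-edges at a vertex injectively to half-edges at the image vertex, so the total coefficient sum is preserved and the sum-zero subgroups are carried to one another). Everything else --- flatness of free $\Z$-modules, and Noetherianity of finitely generated $\FI$-modules over $\Z$ --- is standard and already cited in the paper.
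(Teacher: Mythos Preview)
Your proof is correct and follows essentially the same strategy as the paper's: factor $\widetilde{Sw}_q(G_\dt)$ as $\mathbf{A}_{G_\dt}$ tensored with an $\FI$-module built from half-edges at $q$ distinct vertices, then show the latter is a finitely generated $\FI$-object. The only difference in execution is that the paper writes down an explicit spanning $\FI$-set for $P_q$ --- tuples $(\widetilde{h}_1,\ldots,\widetilde{h}_q)$ of ordered pairs of half-edges at distinct vertices --- and cites Noetherianity of $\FI$-sets \cite[Proposition 4.2]{RSW}, whereas you embed $P_q$ into a permutation $\FI$-module via the sum-zero inclusion and cite Noetherianity of $\FI$-modules over $\Z$ \cite{CEFN}; your route is marginally less explicit but avoids any claim about a basis of $P_q$.
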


\begin{remark}
One should observe that this lemma is false if we insist on working with the usual Swiatkowski complex instead of the reduced one.
\end{remark}

\begin{proof}
We begin by defining an $\FI$-set, in the sense of \cite{RSW}, $X_{q,d}(G_\dt)$ For each $n$, the elements of $X_{q,d}(G_\dt)$ are tuples
\[
(m,\widetilde{h}_1,\ldots,\widetilde{h}_q)
\]
where $m$ is a monomial in $A_{G_n}$ of degree $d$, $\widetilde{h}_i$ is a pair of distinct half-edges containing a common vertex of $G_n$, and $\widetilde{h}_i, \widetilde{h}_j$ are associated to different vertices whenever $i \neq j$. It isn't hard to see that $X_{q,d}(G_\dt)$ is a $\FI$-subset of a product of finitely generated $\FI$-sets. This implies that it is itself finitely generated (see \cite[Proposition 4.2]{RSW}).

Note that we may associated to each element of $X_{q,d}(G_n)$ a generating element of $\widetilde{Sw}_q(G_n)$. In fact, we have that
\[
\widetilde{Sw}(G_\dt) \cong \bigoplus_{d \geq 0} \Z X_{q,d}(G_\dt)
\]
where $\Z X_{q,d}(G_\dt)$ is the $\FI$-module $\Z$-linearization of $X_{q,d}(G_\dt)$. This shows that $\widetilde{Sw}(G_\dt)$ is finitely generated, as desired.
\end{proof}

As previously stated, these two lemmas and the Noetherian property immediately lead to the following consequence.

\begin{theorem}\label{mainthm}
Let $G_\dt$ be an edge-linear $\FI$-graph. For each $q\geq 0$, the $\mathbf{A}_{G_\dt}$-module $\mathcal{H}_q(G_\dt)$ is finitely generated.
\end{theorem}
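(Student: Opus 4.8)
The plan is to assemble the two preceding lemmas together with the Noetherian property established in Proposition \ref{algfg}. First I would note that, by the preceding discussion, the collection $\{\widetilde{Sw}(G_n)\}_{n\geq 0}$ forms a complex of graded $\mathbf{A}_{G_\dt}$-modules, and by the second lemma each term $\widetilde{Sw}_q(G_\dt)$ is a finitely generated $\mathbf{A}_{G_\dt}$-module. Since $G_n$ has no isolated vertices for all $n$, the inclusion $\widetilde{Sw}(G_n)\hookrightarrow Sw(G_n)$ is a quasi-isomorphism (An, Drummond-Cole, Knudsen, Proposition 4.9), so by Theorem \ref{swiahomology} we have $\mathcal{H}_q(G_n)\cong H_q(\widetilde{Sw}(G_n))$ compatibly with the $\FI$-structure, hence $\mathcal{H}_q(G_\dt)\cong H_q(\widetilde{Sw}(G_\dt))$ as graded $\mathbf{A}_{G_\dt}$-modules.

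The key homological-algebra step is then: the homology of a complex of finitely generated modules over a Noetherian $\FI$-algebra is again finitely generated. Concretely, $\widetilde{Sw}_q(G_\dt)$ is finitely generated, so its submodule of cycles $Z_q := \ker(\partial\colon \widetilde{Sw}_q(G_\dt)\to \widetilde{Sw}_{q-1}(G_\dt))$ is finitely generated by the Noetherian property (Proposition \ref{algfg} shows $\mathbf{A}_{G_\dt}$ is a quotient of $\mathbf{A}^{\otimes d}$, hence Noetherian by Theorem \ref{noeth}). Likewise the submodule of boundaries $B_q := \operatorname{im}(\partial\colon \widetilde{Sw}_{q+1}(G_\dt)\to\widetilde{Sw}_q(G_\dt))$ is finitely generated, being a quotient of the finitely generated module $\widetilde{Sw}_{q+1}(G_\dt)$. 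Then $H_q(\widetilde{Sw}(G_\dt)) = Z_q/B_q$ is a quotient of a finitely generated $\mathbf{A}_{G_\dt}$-module, hence finitely generated. All of these constructions are functorial in $[n]$, so this is an identity of $\mathbf{A}_{G_\dt}$-modules, not merely a pointwise statement.

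I would remark that the grading is carried along throughout: $\widetilde{Sw}_q(G_\dt)$ is graded (it is the $\Z$-linearization of the graded $\FI$-set $X_{q,d}(G_\dt)$, with $d$ recording the internal degree), the differential $\partial$ preserves the internal grading, and $\mathbf{A}_{G_\dt}$ is standard graded, so $Z_q$, $B_q$, and $Z_q/B_q$ are all graded $\mathbf{A}_{G_\dt}$-modules. This is precisely what is needed to feed $\mathcal{H}_q(G_\dt)$ into Corollary \ref{betti} afterward.

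The only point requiring genuine care, rather than bookkeeping, is that the Noetherian property applies to the \emph{submodule} $Z_q$: this is exactly where we use that $\mathbf{A}_{G_\dt}$ is Noetherian and not merely that finitely generated $\mathbf{A}_{G_\dt}$-modules exist — an arbitrary sub-$\mathbf{A}_{G_\dt}$-module of a finitely generated one need not be finitely generated for a general $\FI$-algebra, and it is the reduction to $\mathbf{A}^{\otimes d}$ via Proposition \ref{algfg} that rescues us. Everything else is a routine transcription of the classical argument that homology of a complex of Noetherian modules is Noetherian into the $\FI$-equivariant setting.
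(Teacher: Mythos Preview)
Your proposal is correct and follows exactly the approach the paper takes: the paper states that the theorem is an immediate consequence of the two preceding lemmas (that the reduced Swiatkowski complexes assemble into a complex of graded $\mathbf{A}_{G_\dt}$-modules with finitely generated terms) together with the Noetherian property from Proposition~\ref{algfg}. You have simply written out the routine homological-algebra step (cycles are finitely generated by Noetherianity, hence so is the quotient $Z_q/B_q$) that the paper leaves implicit.
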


\begin{remark}
The two proceeding lemma do not use edge-linearity of the $\FI$-graph $G_\dt$. Indeed, the only thing preventing one from concluding an analog of Theorem \ref{mainthm} for all $\FI$-graphs is the failing of Noetherianity outside of the linear case. It is unclear whether one should expect such a finite generation result in these cases.
\end{remark}

Base change from $\Z$ to any field will preserve finite generation of $\FI$-modules, and so we also obtain the following as a corollary to the above

\begin{corollary}
Let $G_\dt$ be an edge-linear $\FI$-graph, and let $k$ be a field. Then for each $p,j \geq 0$, the function
\[
n \mapsto \beta_{p,j}(\mathcal{H}_q(G_n;k))
\]
agrees with a polynomial for $n \gg 0$. Moreover, there exists a finite list of integers
\[
j_0(\mathcal{H}_q(G_n;k),p) < \ldots < j_t(\mathcal{H}_q(G_n;k),p)
\]
such that for all $n \gg 0$
\[
\beta_{n,p,j}(\mathcal{H}_q(G_n;k)) \neq 0 \iff j \in \{j_0(\mathcal{H}_q(G_n;k),p), \ldots, j_t(\mathcal{H}_q(G_n;k),p)\}.
\]
\end{corollary}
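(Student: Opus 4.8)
The plan is to deduce the statement from Corollary \ref{betti}, so the main work is to identify $\mathcal{H}_q(G_n;k)$, as $n$ varies, with a finitely generated graded module over a finitely generated standard graded $\FI$-algebra over $k$. The algebra will be $\mathbf{A}_{G_\dt}\otimes_\Z k$. By Proposition \ref{algfg}, $\mathbf{A}_{G_\dt}$ is a quotient of $\mathbf{A}^{\otimes d}$ for some $d$, and the quotient map of that proposition merely identifies degree-one variables; tensoring with $k$ therefore presents $\mathbf{A}_{G_\dt}\otimes_\Z k$ as a quotient, by a homogeneous ideal, of $(\mathbf{A}\otimes_\Z k)^{\otimes d}=(k[x_1,\dots,x_n])^{\otimes d}$. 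Hence $\mathbf{A}_{G_\dt}\otimes_\Z k$ is a finitely generated standard graded $\FI$-algebra, and since it is a quotient of the Noetherian $\FI$-algebra $(\mathbf{A}\otimes_\Z k)^{\otimes d}$ (Theorem \ref{noeth}), it is itself Noetherian.

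First I would show that the collection $\{\mathcal{H}_q(G_n;k)\}_n$ forms a finitely generated graded $\mathbf{A}_{G_\dt}\otimes_\Z k$-module. I would do this at the chain level, to avoid having to know whether $\mathcal{H}_q(G_n)$ is $\Z$-torsion-free: since $\widetilde{Sw}(G_n)$ is a bounded complex of free abelian groups whose homology is $H_\star(U\F_n(G_n);\Z)$, its base change $\widetilde{Sw}(G_\dt)\otimes_\Z k$ is the reduced Swiatkowski complex with coefficients in $k$, and so has homology $H_\star(U\F_n(G_n);k)$; thus $\mathcal{H}_q(G_\dt;k)\cong H_q\!\bigl(\widetilde{Sw}(G_\dt)\otimes_\Z k\bigr)$ as graded $\FI$-modules over $\mathbf{A}_{G_\dt}\otimes_\Z k$. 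The two lemmas preceding Theorem \ref{mainthm} say that $\widetilde{Sw}(G_\dt)$ is a complex of finitely generated graded $\mathbf{A}_{G_\dt}$-modules; since base change along $\Z\to k$ sends generators to generators and preserves gradings, $\widetilde{Sw}(G_\dt)\otimes_\Z k$ is a complex of finitely generated graded $\mathbf{A}_{G_\dt}\otimes_\Z k$-modules. Noetherianity of $\mathbf{A}_{G_\dt}\otimes_\Z k$ then forces the kernels and images of its differentials, and hence the subquotient $\mathcal{H}_q(G_\dt;k)$, to be finitely generated and graded.

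With this established I would simply apply Corollary \ref{betti} to the $\FI$-algebra $\mathbf{A}_{G_\dt}\otimes_\Z k$ and the module $M=\mathcal{H}_q(G_\dt;k)$: for each fixed $p,j\geq 0$ the function $n\mapsto\beta_{n,p,j}(M)=\dim_k\Tor_p^{A_{G_n}\otimes_\Z k}\!\bigl(\mathcal{H}_q(G_n;k),k\bigr)_j$ agrees with a polynomial for $n\gg 0$, and there is a finite list $j_0<\dots<j_t$, depending only on $p$ (and on the fixed $q$), such that for all $n\gg 0$ one has $\beta_{n,p,j}(M)\neq 0$ if and only if $j\in\{j_0,\dots,j_t\}$. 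Since $\beta_{p,j}(\mathcal{H}_q(G_n;k))$ is by definition $\beta_{n,p,j}(M)$, this is exactly the claim.

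I do not anticipate a real obstacle: Theorem \ref{mainthm}, Proposition \ref{algfg} and Corollary \ref{betti} already do the heavy lifting. The only genuinely delicate point is the interaction of homology with the base change $\Z\to k$, which is the reason I would route the argument through the free complex $\widetilde{Sw}(G_\dt)$ rather than through $\mathcal{H}_q(G_\dt)$ itself; the remaining checks — that the quotient presentation of $\mathbf{A}_{G_\dt}$ is by a homogeneous ideal, and that finite generation and gradedness survive base change — are routine.
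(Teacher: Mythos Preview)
Your proposal is correct and follows essentially the same route as the paper: invoke Theorem \ref{mainthm} (via the two lemmas preceding it and Proposition \ref{algfg}) to get finite generation, then apply Corollary \ref{betti}. The paper dispatches the passage to field coefficients in a single sentence (``Base change from $\Z$ to any field will preserve finite generation of $\FI$-modules''), whereas you are more careful, routing through the free complex $\widetilde{Sw}(G_\dt)\otimes_\Z k$ to ensure you are really computing $\mathcal{H}_q(G_\dt;k)$ rather than $\mathcal{H}_q(G_\dt)\otimes_\Z k$; this extra care is warranted and, if anything, fills a small gap in the paper's terse treatment.
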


\section{An inductive method for computation}

In this section we outline an inductive method with which one can compute the $\mathbf{A}_{G_\dt}$-module $\mathcal{H}_q(G_\dt)$. This method extends a similar method originally described by An, Drummond-Cole, and Knudsen \cite[Proposition 5.13]{ADK}. We then conclude the section by performing some example computations

As with the previous section, we fix a edge-linear $\FI$-graph $G_\dt$, which we make torsion free by working with its tail.

\begin{definition}
The theory of finitely generated $\FI$-graphs tells us that for $n \gg 0$ the $\Sn_n$-orbits of vertices of $G_n$ eventually stabilize (see, \cite[Proposition 3.6]{RSW}), in the sense that the transition maps of $G_\dt$ induce isomorphisms
\[
V(G_n)/\Sn_n \cong V(G_{n+1})/\Sn_{n+1}.
\]
Let $m \gg 0$ be in this stable range, and let $v$ be a vertex of $G_m$ which is fixed by $\Sn_m$. We will often say that $v$ is an \textbf{invariant vertex of $G_\dt$}. Then all images of $v$ under all transition maps of $G_\dt$ are also invariant under the action of the relevant symmetric group. By abuse of notation, we say that $v$ is a vertex of $G_n$ for all $n \geq m$.

The \textbf{blow up of $G_\dt$ at $v$} is an $\FI$-graph $Bl_v(G_\dt)$, defined as follows. For $n \geq m$, $Bl_v(G_n)$ is the graph with vertex set
\[
V(Bl_v(G_n)) := (V(G_n) - \{v\}) \cup (\{v\} \times H(v)),
\]
where $H(v)$ is the set of half-edges containing $v$. The edges of $Bl_v(G_n)$ are the edges of $G_n$, altered in the obvious way to account for the new vertices. We also set $Bl_v(G_n) = \emptyset$ for $n < m$. See \cite[Figure 9]{ADK} for a pictorial representation of the blow up.

Let $f:[n] \hookrightarrow [r]$ be an injection of sets. Then the transition map induced by $f$ is defined as follows. On vertices not of the form $v \times h$, the transition map agrees with $G(f)$. Otherwise, we have
\[
Bl_v(G_\dt)(f)(v \times h) = v \times G(f)(h).
\]
This is well defined, as $v$ was an invariant vertex.
\end{definition}

\begin{remark}
We will often talk of invariant vertices of an $\FI$-graph, without making explicit reference to what degree they originate from. Because much of what we care above is asymptotic, this shouldn't cause confusion.
\end{remark}

Following the techniques of \cite{ADK}, our goal will be to relate $\mathcal{H}_q(G_\dt)$ to $\mathcal{H}_q(Bl_v(G_\dt))$. We first observe that $\mathcal{H}_q(Bl_v(G_\dt))$ is a $\mathbf{A}_{G_\dt}$-module. Indeed, the construction of the blowup does not erase, or meaningfully alter edges.

\begin{lemma}
Let $v$ be an invariant vector of $G_\dt$. Then there is an inclusion of complexes
\[
\widetilde{Sw}(Bl_v(G_\dt)) \hookrightarrow \widetilde{Sw}(G_\dt).
\]
\end{lemma}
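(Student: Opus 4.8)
The plan is to construct the inclusion $\widetilde{Sw}(Bl_v(G_\dt)) \hookrightarrow \widetilde{Sw}(G_\dt)$ levelwise, check it is $\mathbf{A}_{G_\dt}$-linear and compatible with the differential, and then verify it commutes with the $\FI$-transition maps. First I would recall the combinatorial picture: for an invariant vertex $v$, the blow-up $Bl_v(G_n)$ separates $v$ into one new vertex $v\times h$ for each half-edge $h\in H(v)$, so each $v\times h$ has degree $1$, with its unique half-edge canonically identified with $h$. Edges of $G_n$ not adjacent to $v$ are untouched, and the variables $x_e$ are literally the same, so $A_{Bl_v(G_n)} = A_{G_n}$ and hence $\mathbf{A}_{Bl_v(G_\dt)} = \mathbf{A}_{G_\dt}$ as $\FI$-algebras. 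This already makes it meaningful to talk about a map of $\mathbf{A}_{G_\dt}$-modules.

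Next I would define the map on the tensor factors. For the factor $\widetilde{S}(w)$ at a vertex $w \neq v$, use the identity; these vertices, their half-edges, and the $h_{i,j}$ generators are unchanged by the blow-up. For the vertex $v$, the blow-up replaces the single factor $\widetilde{S}(v) = \langle \emptyset, h_{i,j}\rangle$ by the tensor product $\bigotimes_{h\in H(v)} \widetilde{S}(v\times h)$; but each $\widetilde{S}(v\times h)$ is just $\langle \emptyset \rangle$ since $v\times h$ has only one half-edge (so no nontrivial differences $h_{i,j}$ exist). Thus $\bigotimes_{h} \widetilde{S}(v\times h) = \Z\langle \emptyset\rangle$, and we send this generator $\emptyset$ to $\emptyset \in \widetilde{S}(v)$. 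Tensoring all these factor-maps together over $A_{G_\dt}$ (which is legitimate since the ground algebras agree) gives an $A_{G_n}$-linear map $\widetilde{Sw}(Bl_v(G_n)) \to \widetilde{Sw}(G_n)$ for each $n$. Injectivity is immediate: on pure tensors the map is the identity on every factor except the (trivial) one at $v$, where it is the identity $\langle\emptyset\rangle \to \langle\emptyset\rangle$ followed by the inclusion of the summand $\Z\langle\emptyset\rangle \hookrightarrow \widetilde{S}(v)$; and generators map to distinct generators.

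Then I would check this is a chain map. The differential $\partial$ only acts on the $h_{i,j}$-type generators via $\partial(h_{i,j}) = e(h_i) - e(h_j)$; since all the factors carrying such generators (at vertices $w\neq v$) are mapped by the identity and have identical adjacency data in $G_n$ and $Bl_v(G_n)$, $\partial$ is visibly preserved, while the factor at $v$ carries only $\emptyset$, on which $\partial$ vanishes on both sides. Finally, $\FI$-compatibility: a transition map $f:[n]\hookrightarrow[r]$ acts on $\widetilde{Sw}(G_\dt)$ by relabeling half-edges and edges via $G(f)$, and on $\widetilde{Sw}(Bl_v(G_\dt))$ via $Bl_v(G_\dt)(f)$, which by definition agrees with $G(f)$ on everything except sending $v\times h \mapsto v \times G(f)(h)$. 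Because $v$ is invariant and our map collapses the whole $v$-block to the single generator $\emptyset$ independently of the labeling of $H(v)$, the square commutes. I expect no serious obstacle here; the one point demanding a little care is verifying that $Bl_v(G_\dt)(f)$ really does preserve the distinguished half-edge of each $v\times h$ and that the edge adjacent to $v\times h$ in $Bl_v(G_n)$ maps, under the inclusion, to the same edge adjacent to $v$ in $G_n$ — i.e.\ that the inclusion is not just abstractly but naturally the obvious "reglue the blown-up vertex" map. This is a direct unwinding of the definition of $Bl_v$ and of $Sw(G(f))$ from the earlier remark, so it is routine.
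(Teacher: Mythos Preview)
Your proof is correct and follows essentially the same approach as the paper: both identify $\widetilde{Sw}(Bl_v(G_n))$ with the subcomplex of $\widetilde{Sw}(G_n)$ generated by pure tensors whose $v$-th factor is $\emptyset$, using that each $v\times h$ has a single half-edge so $\widetilde{S}(v\times h)=\langle\emptyset\rangle$, and then invoke the invariance of $v$ for the $\FI$-compatibility. The paper states this in two sentences while you spell out the chain-map and naturality verifications explicitly, but the content is the same.
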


\begin{proof}
one can realize $\widetilde{Sw}(Bl_v(G_n))$ as the subcomplex of $\widetilde{Sw}(G_n))$ generated by pure tensors whose $v$-th term is $\emptyset$. The fact that this extends to the desired inclusion follows from the fact that $v$ was chosen to be an invariant vector.
\end{proof}

The above proof immediately points to how one can describe the cokernel of $\widetilde{Sw}(Bl_v(G_\dt)) \hookrightarrow \widetilde{Sw}(G_\dt)$. To accomplish this we will need some notation. If $v$ is an invariant vector of $G_\dt$, then we define an $\FI$-module $\widetilde{H}(v)$ to be that for which
\[
\widetilde{H}(v)_n = \Z<h_i-h_j \mid h_i,h_j \in H(v)_n>,
\]
where $H(v)_n$ is the set of half-edges containing $v$, thought of as a vertex of $G_n$. We now extend the above lemma as follows. Note that this result generalizes a lemma of An, Drummond-Cole, and Knudsen \cite{ADK}.

\begin{lemma}
Let $v$ be an invariant vector of $G_\dt$. Then there is an exact sequence of complexes
\[
0 \rightarrow \widetilde{Sw}(Bl_v(G_\dt)) \rightarrow \widetilde{Sw}(G_\dt) \rightarrow \widetilde{H}(v) \otimes \widetilde{Sw}(Bl_v(G_\dt))\{1\}\rightarrow 0
\]
\end{lemma}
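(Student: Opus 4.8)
The plan is to obtain the short exact sequence from a direct‑sum decomposition of the $v$‑factor of the reduced Swiatkowski complex, in the same spirit as the proof of the preceding lemma. First I would record the bigraded splitting
\[
\widetilde{S}(v) = \Z\langle\emptyset\rangle \oplus \widetilde{H}(v),
\]
where $\Z\langle\emptyset\rangle$ sits in bidegree $(0,0)$ and $\widetilde{H}(v) = \langle h_i - h_j \mid h_i,h_j \in H(v)\rangle$ is concentrated in bidegree $(1,1)$. Tensoring with $A_{G_n}\otimes\bigotimes_{w\neq v}\widetilde{S}(w)$ gives a decomposition of $\widetilde{Sw}(G_n)$ as a direct sum of bigraded $A_{G_n}$‑modules, in which the $\Z\langle\emptyset\rangle$‑summand is precisely the $A_{G_n}$‑span of the pure tensors whose $v$‑term is $\emptyset$ — that is, the subcomplex $\widetilde{Sw}(Bl_v(G_n))$ identified in the previous lemma — and the $\widetilde{H}(v)$‑summand is $\widetilde{H}(v)\otimes\big(A_{G_n}\otimes\bigotimes_{w\neq v}\widetilde{S}(w)\big)$.

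Second, I would identify this $\widetilde{H}(v)$‑summand with $\widetilde{H}(v)\otimes\widetilde{Sw}(Bl_v(G_n))\{1\}$. The key observations are that the blow‑up leaves the edge set unchanged, so $A_{Bl_v(G_n)} = A_{G_n}$ canonically, and that each new vertex $v\times h$ carries a single half‑edge, so $\widetilde{S}(v\times h) = \Z\langle\emptyset\rangle$; hence $\widetilde{Sw}(Bl_v(G_n)) \cong A_{G_n}\otimes\bigotimes_{w\neq v}\widetilde{S}(w)$. The internal degree $1$ borne by each generator $h_i-h_j$ is what produces the $\{1\}$ shift, with $\widetilde{H}(v)$ regarded as placed in homological degree $1$.

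Third — and this is the step needing a little care — I would check that the differential induced on the quotient is the expected one, $\mathrm{id}_{\widetilde{H}(v)}\otimes\partial_{\widetilde{Sw}(Bl_v(G_n))}$. The only way the graded splitting fails to be a splitting of complexes is the cross term: when $\partial$ is applied to a pure tensor whose $v$‑term is $h_{i,j}$, the summand in which $\partial$ hits $h_{i,j}$ equals $(e(h_i)-e(h_j))$ times the corresponding tensor with $v$‑term $\emptyset$, which lies in $\widetilde{Sw}(Bl_v(G_n))$ and therefore vanishes in the quotient; the surviving summands differentiate the factors indexed by $w\neq v$, which is exactly the differential of $\widetilde{Sw}(Bl_v(G_n))$. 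This shows the inclusion is a map of complexes with cokernel $\widetilde{H}(v)\otimes\widetilde{Sw}(Bl_v(G_n))\{1\}$, and the sequence is manifestly exact in each bidegree.

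Finally, I would note that every identification above is natural in $G_n$: since $v$ is an invariant vertex, each transition map $G(f)$ carries $H(v)_n$ into $H(v)_r$, sends $\emptyset$‑terms to $\emptyset$‑terms and $h_{i,j}$‑terms to $h_{i,j}$‑terms, and is compatible with the canonical (hence $\FI$‑equivariant) identification of edge sets; and $\widetilde{H}(v)$ is literally the $\FI$‑module defined just before the statement. Thus all three terms are complexes of graded $\mathbf{A}_{G_\dt}$‑modules and both maps are morphisms of such, which is the asserted short exact sequence. I expect the only genuine obstacle to be the bookkeeping of the bigrading shift together with the verification that the cross terms of $\partial$ fall into the subcomplex; the rest is the same direct‑summand argument already used for the inclusion lemma.
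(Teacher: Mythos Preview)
Your proposal is correct and matches what the paper intends: the paper does not supply a formal proof of this lemma, but remarks just before the statement that the proof of the preceding inclusion lemma ``immediately points to how one can describe the cokernel'' and credits the point-level version to An--Drummond-Cole--Knudsen. Your direct-sum decomposition of $\widetilde{S}(v)$, identification of the two summands with $\widetilde{Sw}(Bl_v(G_n))$ and $\widetilde{H}(v)\otimes\widetilde{Sw}(Bl_v(G_n))\{1\}$, verification that the cross terms of $\partial$ land in the subcomplex, and check of $\FI$-naturality via invariance of $v$ are exactly the details the paper leaves implicit.
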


It therefore follows that computing $\mathcal{H}_q(G_\dt)$ can often times be reduced to computing $\mathcal{H}_q(Bl_v(G_\dt))$. This inductive approach becomes even more appealing when one considers how simple its base case is. Indeed, the classification theorem implies that after sufficiently many blow-ups one is left with some fixed number of disjoint edges, invariant under the action of the symmetric group, paired with $n$ disjoint copies of some graph, which the symmetric group permutes.

\begin{example}

Take $G_\dt$ to be the star graph
\[
G_n = K_{n,1}
\]
This graph has a unique invariant vertex, and $Bl_v(G_n)$ is a disjoint union of $n$ edges. It is fact that $U\F_n(G_n)$ only has homology in degrees 0 and 1 (see, for instance, \cite{Sw}). We therefore have,
\[
0 \rightarrow \mathcal{H}_1(G_\dt) \rightarrow S^{(\dt-1,1)} \otimes \mathcal{H}_0(Bl_v(G_\dt))\{1\} \rightarrow \mathcal{H}_0(Bl_v(G_\dt)) \rightarrow \mathcal{H}_0(G_\dt) \rightarrow 0,
\]
where $S^{(\dt-1,1)}$ is the $\FI$-module for which $S^{(n-1,1)}$ is the standard representation of $\Sn_n$. Examining this exact sequence from right to left, we note that $\mathcal{H}_0(G_n) = \Z$ for all $n \geq 0$, while $\mathcal{H}_0(Bl_v(G_\dt)) \cong \mathbf{A}$. The map 
\[
\mathcal{H}_0(Bl_v(G_\dt)) \rightarrow \mathcal{H}_0(G_\dt) \rightarrow 0 
\]
identifies all of the disjoint edges of $Bl_v(G_\dt)$. In particular, the kernel of this map is the module $\mi_{\dt}$, defined on points by
\[
\mi_n = (x_i - x_j \mid i \neq j)
\]

On the other hand, the map
\[
S^{(\dt-1,1)} \otimes \mathbf{A}\{1\} \rightarrow \mi_\dt \rightarrow 0
\]
is given by
\[
(e_i - e_j) \otimes 1 \mapsto x_i-x_j.
\]
The kernel of this map is the submodule of $S^{(\dt-1,1)} \otimes \mathbf{A}\{1\}$ generated in degree 3 by 
\[
(e_1-e_2) \otimes x_3 + (e_3-e_1) \otimes x_2 + (e_2-e_3) \otimes x_1
\]

In particular, we conclude that all of the homology classes in $H_1(U\F_n(G_n))$ are determined by $H_1(U\F_2(G_3))$. This recovers \cite[Lemma 5.5]{ADK}. A straight forward computation also verifies that for all $d \gg 0$, the $\Z[x_1,\ldots,x_d]$-module
\[
\mathcal{H}_1(G_d)
\]
is generated in grade 2, with relations generated in grades $\leq 3$, verifying the second part of Theorem \ref{bettithm} for the first two Betti numbers.

\end{example}


\begin{thebibliography}{aaaa}
\small
\bibitem[ADK]{ADK} B. H. An, G. C. Drummond-Cole and B. Knudsen, \textit{Subdivisional spaces and graph braid groups}, \arXiv{1708.02351}.
\bibitem[CEF]{CEF} T. Church, J.\,S. Ellenberg and B. Farb, \textit{$\FI$-modules and stability for representations of symmetric groups}, Duke Math. J. 164, no. 9 (2015), 1833-1910.
\bibitem[CEFN]{CEFN} T. Church, J.\,S. Ellenberg, B. Farb, and R. Nagpal, \textit{$\FI$-modules over Noetherian rings}, Geom. Topol. 18 (2014) 2951-2984.
\bibitem[CF]{CF} T. Church and B. Farb, \textit{Representation theory and homological stability}, Advances in Mathematics (2013), 250-314.
\bibitem[D]{D} A. Djament, \textit{Des propriétés de finitude des foncteurs polynomiaux}, \arXiv{1308.4698}.
\bibitem[DV]{DV} A. Djament, and C. Vespa, \textit{Foncteurs faiblement polynomiaux}, \arXiv{1308.4106}.
\bibitem[Fa]{Fa} M. Farber, \textit{Topological complexity of motion planning} Discrete and Computational Geometry 29 (2003), 211-221.
\bibitem[FS]{FS} D. Farley and L. Sabalka, \textit{Discrete Morse theory and graph braid groups}, Algebr. Geom. Topol. 5 (2005), 1075-1109 (electronic). \url{http://www.users.miamioh.edu/farleyds/FS1.pdf}.
\bibitem[G]{G} R. Ghrist, \textit{Configuration spaces and braid groups on graphs in robotics}, Knots, braids, and mapping class groups - papers dedicated to Joan S. Birman (New York, 1998), AMS/IP Stud. Adv. Math., 24, Amer. Math. Soc., Providence, RI (2001), 29–40. \url{https://www.math.upenn.edu/~ghrist/preprints/birman.pdf}.
\bibitem[L]{L} D. L\"utgehetmann, \textit{Representation Stability for Configuration Spaces of Graphs}, \arXiv{1701.03490}.
\bibitem[KP]{KP} K. H. Ko and H. W. Park, \textit{Characteristics of graph braid groups}, Discrete Comput Geom (2012) 48: 915. \arXiv{1101.2648}.
\bibitem[KKP]{KKP} J. H. Kim, K. H. Ko, and H. W. Park, \textit{Graph braid groups and right-angled Artin groups}, Trans. Amer. Math. Soc. 364 (2012), 309-360. \arXiv{0805.0082}.
\bibitem[NR]{NR} U. Nagel and T. R\"omer, \textit{FI- and OI-modules with varying coefficients}, \arXiv{1710.09247}.
\bibitem[R]{R} E. Ramos, \textit{Stability phenomena in the homology of tree braid groups}, to appear, Algebraic and Geometric Topology, \arXiv{1609.05611}.
\bibitem[RW]{RW} E. Ramos and G. White, \textit{Families of nested graphs with compatible symmetric-group actions}, \arXiv{1711.07456}.
\bibitem[RSW]{RSW} E. Ramos, D Speyer, and G. White, \textit{FI-sets with relations}, \arXiv{1804.04238}.
\bibitem[Sa]{Sa} S. Sam, \textit{Syzygies of bounded rank symmetric tensors are generated in bounded degree}, Math. Ann., to appear. \arXiv{1608.01722}.
\bibitem[Sa2]{Sa2} S. Sam, Steven V Sam, \textit{Ideals of bounded rank symmetric tensors are generated in bounded degree}, Invent. Math. 207 (2017), no. 1, 1–21, \arXiv{1510.04904}.
\bibitem[Sn]{Sn} A. Snowden, \textit{Syzygies of Segre embeddings and $\Delta$-modules}, Duke Math. J. 162 (2013), no. 2, 225-277, \arXiv{1006.5248}.
\bibitem[Sw]{Sw} J. Swiatkowski, \textit{Estimates for homological dimension of configuration spaces of graphs} Colloquium Mathematicum 89.1 (2001): 69-79.
\bibitem[SS]{SS} S. Sam and A. Snowden, \textit{$GL$-equivariant modules over polynomial rings in infinitely many variables}, Trans. Amer. Math. Soc. 368 (2016), 1097-1158.
\bibitem[SS2]{SS2} S. Sam, and A. Snowden, \textit{Gr\"obner methods for representations of combinatorial categories}, J. Amer. Math. Soc. 30 (2017), 159-203. \arXiv{1409.1670}.

\end{thebibliography}
\end{document}